\documentclass[twoside,12pt]{article}



\usepackage{amssymb, amsmath}
\usepackage{mathtools}
\usepackage[thmmarks,amsmath]{ntheorem}

\usepackage[OT1]{fontenc}

\usepackage[utf8]{inputenc}
\usepackage[T1]{fontenc}
\usepackage{dsfont}
\usepackage{latexsym} 

\usepackage{enumerate}	
\usepackage{footnpag} 
\usepackage{makeidx}  

\usepackage{geometry}	
\geometry{a4paper,left=35mm,right=25mm, top=3cm, bottom=3cm}

\usepackage{parcolumns}

\usepackage{graphicx}
\usepackage{color} 

\usepackage[all]{xy}

\usepackage{url}


%




\renewcommand{\geq}{\geqslant}	
\renewcommand{\leq}{\leqslant}
\def\mid|{\hs\middle|\hs}
\def\hs{\hspace*{0.1cm}}



\def\1{\mathbb{1}}
\newcommand{\sub}{\subseteq}	
\renewcommand{\1}{\mathds{1}}					

\newcommand{\B}{\mathcal{B}}

\newcommand{\I}{\mathcal{I}}

\newcommand{\NN}{\mathbb{N}}	
\newcommand{\RR}{\mathbb{R}} 

\def\t{\textnormal}


\def\c{\cite }
\newcommand{\hide}[1]{}  



\def\phi{\varphi}
\def\rho{\varrho}


\DeclareMathAlphabet\mathbfcal{OMS}{cmsy}{b}{n}


\newcommand{\solv}{\mathop{\textnormal{solv}}}

\def\dd{^\textnormal{dd}}
\newcommand{\Sup}{\vee}		
	%

\let\int\relax 
\DeclareMathOperator{\int}{int}

\renewcommand{\theequation}{\arabic{equation}}


\usepackage{enumitem}
\setenumerate[0]{label=(\roman*)}
\setenumerate[2]{label=(\alph*)}

\newcommand*{\smallmath}[2][4]{\scalebox{#1}{$#2$}}%
%


\newcounter{Zaehler}
\setcounter{Zaehler}{1}

\theoremstyle{plain}
\theoremheaderfont{\normalfont\bfseries}
\theorembodyfont{\itshape}
\theoremseparator{.} 
\theorempreskip{\topsep}
\theorempostskip{\topsep} 
\theoremindent0cm
\theoremnumbering{arabic}
\theoremsymbol{}

\newtheorem{theorem}{Theorem}
\newtheorem{lemma}[theorem]{Lemma}
\newtheorem{corollary}[theorem]{Corollary}
\newtheorem{proposition}[theorem]{Proposition}
%
%

\theoremstyle{plain}
\theoremheaderfont{\normalfont\bfseries}
\theorembodyfont{\upshape}
\theoremseparator{.} 
\theorempreskip{\topsep}
\theorempostskip{\topsep} 
\theoremindent0cm
\theoremnumbering{arabic}
\theoremsymbol{}

\newtheorem{example}[theorem]{Example}
\newtheorem{numremark}[theorem]{Remark}

\makeatletter
\newcommand{\eqnum}{\leavevmode\hfill\refstepcounter{equation}\textup{\tagform@{\theequation}}}
\makeatother


\theoremstyle{nonumberplain}
\theoremheaderfont{\itshape}
\theorembodyfont{\normalfont}
\theoremseparator{.} 
\theorempreskip{\topsep}
\theorempostskip{\topsep} 
\theoremindent0cm
\theoremsymbol{\ensuremath\square} 
\qedsymbol{$\Box$}

\newtheorem{proof}{Proof}

\setlength{\itemsep}{1mm} 
\setlength{\parsep}{1mm} 
\pagenumbering{arabic}
\setcounter{page}{1}
\sloppy
\parindent 0pt				


\begin{document}
\title{Vector lattice covers of ideals and bands in pre-Riesz spaces}
\author{Anke Kalauch\thanks{Technische Universit\"{a}t Dresden,  Technische Universität Dresden, Institut f\"{u}r Analysis, 01062 Dresden, Germany; anke.kalauch@tu-dresden.de.}, Helena Malinowski\thanks{Unit for BMI, North-West University, Private Bag X6001-209, Potchefstroom 2520, South Africa; lenamalinowski@gmx.de}}
\maketitle
\begin{abstract}
Pre-Riesz spaces are ordered vector spaces which can be order densely embedded into vector lattices, their so-called vector lattice covers. Given a vector lattice cover $Y$ for a pre-Riesz space $X$, we address the question how to find vector lattice covers for subspaces of $X$, such as ideals and bands. We provide conditions such that for a directed ideal $I$ in $X$ its smallest extension ideal in $Y$ is a vector lattice cover. We show a criterion for bands in $X$ and their extension bands in $Y$ as well. Moreover, we state properties of ideals and bands in $X$ which are generated by sets, and of their extensions in $Y$.  
\end{abstract}




\section{Introduction}
In the analysis of partially ordered vector spaces,  subspaces as ideals and bands play a central role. In vector lattices, problems involving those subspaces are broadly discussed in the literature, see e.g.\ \cite{PosOp,Zaa1,Vulikh_en}.
Directed ideals in partially ordered vector spaces were introduced in 
\cite[Definition 2.2]{Kad1951} 
and studied in 
\cite{Bons1954,Bons1956a,Kist1961,Fuc1966}; for a more recent overview see \cite{7}. The investigation of disjointness and bands in partially ordered vector spaces starts in \cite{1}, where for the definition of disjointness sets of upper bounds are used instead of lattice operations. Here a band is defined to be a set that equals its double-disjoint complement, analogously to the notion in Archimedean vector lattices. Ideals and bands are mostly considered in pre-Riesz spaces. An ordered vector space $X$ is called \emph{pre-Riesz} if there is a vector lattice $Y$ and a bipositive linear map $i\colon X\to Y$ such that $i(X)$ is order dense in $Y$. The pair $(Y,i)$ is then called a \emph{vector lattice cover} of $X$. The theory of pre-Riesz spaces and their  vector lattice covers is due to
van Haandel, see 
\cite{vanHaa}. Pre-Riesz spaces cover a wide range of examples, in particular note that every Archimedean directed ordered vector space is a pre-Riesz space. In pre-Riesz spaces, every band is a solvex ideal, see \cite{2}. However, bands are not directed, in general.

In certain cases, ideals and bands in pre-Riesz spaces correspond to ideals and bands in a vector lattice cover, see \cite{3,2}. More precisely, for every solvex ideal $I$ in $X$ there is a smallest ideal $\hat{I}$ in $Y$ that extends $I$, i.e.\ the pre-image of $\hat{I}$ under $i$ equals $I$. This applies, in particular, to directed ideals, as every directed ideal is solvex, see \cite{7}. Furthermore, for every band   
$B$ in $X$ there is a smallest band $\hat{B}$ in $Y$ that extends $B$.
Conversely, ideals in $Y$ can be restricted to ideals in $X$,
and in fordable pre-Riesz spaces bands in $Y$ can be restricted to bands in $X$. Note that fordability ensures that there are sufficiently many disjoint elements in $X$. 
 The extension and restriction method is a central tool to obtain properties of ideals and bands in pre-Riesz spaces, see \cite{2,6}. 

In the present paper we answer more specific questions on extension of ideals and bands generated by sets. Furthermore, we deal with the problem under which conditions for an ideal $I$ in $X$ the set $i(I)$ is order dense in $\hat{I}$, which means that $(\hat{I},i|_I)$ is a vector lattice cover of $I$. The analogous problem for bands is studied as well. These investigations are motivated by research on spaces of operators on vector lattices. 
In many cases, such an operator space is Archimedean and directed and, hence, pre-Riesz, but not a vector lattice, such that one would like to construct an appropriate vector lattice cover. The results of the present paper can be seen as a fundament for these further studies on spaces of operators.     

The paper is organized as follows. In Section 2 we collect all preliminaries.
Section 3 is dedicated to properties of directed ideals generated by a set, where we show the following: 
If $S$ is a non-empty subset of $X_+$ and $I$ the ideal generated by $S$, then $\hat{I}$ equals the ideal in $Y$ generated by $i(S)$. As a consequence, we obtain a
characterization of directed ideals by means of extension ideals. 
For the  band $B$ generated by $S\sub X$, we prove that  $\hat{B}$ equals the band in $Y$ generated by $i(S)$, provided  $X$ is fordable. 
In the last statement fordability can not be omitted, as we see in an example.

Section 4 contains the main results of this paper. The aim is to provide a technique to compute vector lattice covers of ideals and bands in certain pre-Riesz spaces. We show that for a directed ideal $I$ the set $i(I)$ is order dense in $\hat{I}$, provided $X$ is pervasive. Under this condition, we establish that for a band $B$ in $X$ the set 
$i(B)$ is order dense in $\hat{B}$ if and only if $i(B)$ is majorizing in $\hat{B}$.
These statements also allow to compute 
the Dedekind completions of $I$ and $B$ as subspaces of the Dedekind completion of $X$.

The paper contains several examples that clarify the conditions in the theorems and show the limitations of the technique developed in Section 4. We conclude the paper by a surprising example which shows that even for a directed band $B$ in $X$ the smallest extension ideal and the smallest extension band of $B$ might or might not coincide, depending on the choice of the vector lattice cover.

\section{Preliminaries}
Let $X$ be a real vector space and let $X_+$ be a \emph{cone} in $X$, that is,  $X_+$ is a wedge ($x,y\in X_+$ and $\lambda,\mu\geq 0$ imply $\lambda x + \mu y \in X_+$) and $X_+ \cap (-X_+) =\left\{0\right\}$. In $X$ a partial order is defined by $x\leq y$ whenever $y-x\in X_+$. 
The space $(X,X_+)$ (or, loosely $X$) is then called a (\emph{partially}) \emph{ordered vector space}. 
For a linear subspace $D$ of $X$ we
consider in $D$ the order induced from $X$, i.e.\ we set $D_+:=D\cap X_+$.

An ordered vector space $X$ is called \emph{Archimedean} if for every $x,y\in X$ with $nx\leq y$ for every $n\in\NN$ one has $x\leq 0$. 
Clearly, every subspace of an Archimedean ordered vector space is Archimedean.
A subspace $D\sub X$ is called \emph{directed} if for every $x,y\in D$ there is an element $z\in D$ such that $x\leq z$ and $y\leq z$. An ordered vector space $X$ is directed if and only if  $X_+$ is \emph{generating} in $X$, that is, $X = X_+ - X_+$. A linear subspace $D$ of $X$ is \emph{majorizing} in $X$ if for every $x\in X$ there exists $d\in D$ with $x\leq d$. The ordered vector space $X$ has the \emph{Riesz decomposition property} (\emph{RDP}) if for every $x_1,x_2,z \in X_+$ with $z \leq x_1+x_2$ there exist $z_1,z_2 \in X_+$ such that $z = z_1+z_2$ with $z_1 \leq x_1$ and $z_2\leq x_2$. The space $X$ has the RDP if and only if for every $x_1,x_2,x_3,x_4\in X$ with $x_1,x_2 \leq x_3,x_4$ there exists $z\in X$ such that $x_1, x_2 \leq z \leq x_3, x_4$. 

For standard notations in the case that $X$ is a vector lattice see \c{PosOp}. Recall that a vector lattice $X$ is \emph{Dedekind complete} if every non-empty subset of $X$ that is bounded above has a supremum. 
We say that a linear subspace $D$ of a vector lattice $X$ \emph{generates $X$ as a vector lattice} if for every $x\in X$ there exist $a_1, \ldots, a_n, b_1, \ldots, b_m \in D$ such that $x = \bigvee_{i=1}^{n}a_i - \bigvee_{j=1}^{m}b_j$.

We call a linear subspace $D$ of an ordered vector space $X$ \emph{order dense} in $X$ if for every $x\in X$ we have \[x = \inf\left\{z\in D \mid| x\leq z\right\},\] that is, the greatest lower bound of the set $\left\{z\in D \mid| x\leq z\right\}$ exists in $X$ and equals $x$, see \c[p.~360]{113}. Clearly, if $D$ is order dense in $X$, then $D$ is majorizing in $X$.
We give a characterization of order denseness in Lemma \ref{properties.29} below. For this, we need the following preliminary result.

\begin{lemma}\label{properties.27.5}
	Let $X$ be an ordered vector space. Let $A \sub B \sub X$ and $\sup A = y\in X$. Moreover, let $b\leq y$ be true for every $b\in B$. Then the supremum $\sup B$ exists and we have $\sup B = y$.
	Analogously, for $A \sub B \sub X$ with $y=\inf A\in X$ and $y \leq b$ for every $b\in B$ it follows $\inf B = y$.
\end{lemma}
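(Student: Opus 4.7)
The proof is essentially a direct verification of the two defining properties of the supremum, so there is no real obstacle; the statement is a warm-up lemma that packages a convenient observation for later use.

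The plan is as follows. I would first observe that the hypothesis ``$b\leq y$ for every $b\in B$'' immediately gives that $y$ is an upper bound of $B$. It then remains to show that $y$ is the least such upper bound. To this end, I would pick an arbitrary upper bound $z\in X$ of $B$, i.e.\ $b\leq z$ for every $b\in B$. Since $A\subseteq B$, this $z$ is in particular an upper bound of $A$. Using the assumption $\sup A = y$, I conclude $y\leq z$. Combining both properties yields $\sup B = y$.

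For the analogous statement on infima, I would simply dualise: the hypothesis $y\leq b$ for every $b\in B$ makes $y$ a lower bound of $B$, and any other lower bound of $B$ is a lower bound of $A$, hence dominated by $\inf A = y$. Alternatively, one can reduce to the supremum case by passing to $-A\subseteq -B$ and invoking the first part, since $\inf S = -\sup(-S)$ in any ordered vector space. Either route is a one-line argument, so I would favour writing the infimum case out directly in parallel to the supremum case for clarity.
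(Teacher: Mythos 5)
Your argument is correct and coincides with the paper's own proof: both verify that $y$ is an upper bound of $B$ from the hypothesis and that any upper bound of $B$ is an upper bound of $A\subseteq B$, hence dominates $\sup A=y$, with the infimum case handled by the dual argument. No issues.
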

\begin{proof}
	Since we have $b \leq y$ for every $b\in B$, the element $y$ is an upper bound of $B$. Let $s\in X$ be another upper bound of $B$. Due to $A\sub B$ and $b \leq s$ for every $b\in B$ it follows $a \leq s$ for every $a\in A$. Thus we obtain $\sup A = y \leq s$. Hence $y=\sup B$.
	The statement for the infimum follows analogously.
\end{proof}

\begin{lemma}\label{properties.29}
	Let $Y$ be an ordered vector space and  $X\sub Y$. 
	Then $X$ is order dense in $Y$ if and only if 
	\begin{equation*}
	\forall y\in Y \hs \exists A\sub X \colon y = \inf A.
	\end{equation*}
\end{lemma}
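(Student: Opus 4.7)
The plan is to prove both implications directly from the definition of order denseness, using Lemma \ref{properties.27.5} for the non-trivial direction.

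For the forward implication, the conclusion is essentially tautological: assuming $X$ is order dense in $Y$, I would fix $y \in Y$ and observe that the set $A := \{z \in X \mid y \leq z\}$ is, by definition, a subset of $X$ whose infimum in $Y$ equals $y$. So this set itself witnesses the existence claim.

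For the backward implication, I would fix $y \in Y$ together with some $A \subseteq X$ satisfying $y = \inf A$, and then compare $A$ with the canonical set $B := \{z \in X \mid y \leq z\}$ that appears in the definition of order denseness. Since $y$ is a lower bound of $A$, every $a \in A$ dominates $y$, hence $A \subseteq B$, and by construction $y \leq b$ for every $b \in B$. Applying the infimum part of Lemma \ref{properties.27.5} inside $Y$ to $A \subseteq B$ then upgrades $y = \inf A$ to $y = \inf B$, which is precisely the order denseness condition.

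The only real step is recognising that Lemma \ref{properties.27.5} is exactly what converts an ad hoc witnessing set $A$ into the canonical witnessing set $B$; beyond that, no technical obstacle arises.
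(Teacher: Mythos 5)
Your proof is correct and follows essentially the same route as the paper: the forward direction takes the canonical set $\left\{z\in X \mid y\leq z\right\}$ as the witness, and the backward direction embeds the given $A$ into that canonical set and invokes Lemma~\ref{properties.27.5} to transfer the infimum. No gaps.
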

\begin{proof}
	Let $X$ be order dense in $Y$ and let $y\in Y$.
	Then  
	for  $A:=\left\{x\in X\mid| y\leq x\right\}$ we have $y=\inf A$.
	To see the converse implication, 
	let $y\in Y$ and let $A\sub X$ with $y=\inf A$. Since for every $a\in A$ we have $y \leq a$, it follows that $A\sub B:=\left\{x\in X\mid| y\leq x\right\}$. By Lemma~\ref{properties.27.5} we obtain $y=\inf B$. Hence $X$ is order dense in $Y$.
\end{proof}
Clearly, Lemma~\ref{properties.29} can be reformulated for suprema instead of infima.

Next we define disjointness, bands and ideals in ordered vector spaces.
For a subset $M\sub X$ denote the set of upper bounds of $M$ by $M^u=\left\{x\in X\mid| \forall m\in M\colon m\leq x\right\}$. For $x\in X$, we abbreviate the set $\left\{x,-x\right\}$ by $\left\{\pm x\right\}$.
The elements $x,y\in X$ are called \emph{disjoint}, in symbols $x\perp y$, if $\left\{\pm (x+y)\right\}^u = \left\{\pm (x-y)\right\}^u$, for motivation and details see \c{1}. If $X$ is a vector lattice, then this notion of disjointness coincides with the usual one, see \c[Theorem~1.4(4)]{PosOp}. Let $Y$ be an ordered vector space, $X$ an order dense subspace of $Y$, and $x,y\in X$. Then the disjointness notions in $X$ and $Y$ coincide, i.e.\ $x\perp y$ in $X$ holds if and only if $x\perp y$ in $Y$, see \c[Proposition~2.1(ii)]{1}.
The \emph{disjoint complement} of a subset $M\sub X$ is $M^{\t{d}} := \left\{x\in X \mid| \forall y\in M\colon x\perp y\right\}$. A linear subspace $B$ of an ordered vector space $X$ is called a \emph{band} in $X$ if $B\dd = B$, see \c[Definition~5.4]{1}. If $X$ is an Archimedean vector lattice, then this notion of a band coincides with the classical notion of a band in vector lattices (where a band is defined to be an order closed ideal). For every subset $M\sub X$ the disjoint complement $M^{\t{d}}$ is a band, see \c[Proposition~5.5]{1}. In particular, we have $M^{\t{d}} = M^{\t{ddd}}$. We list further properties of bands.

\begin{lemma}
		Let $X$ be an ordered vector space and $\mathcal{S}$ a  non-empty set of bands in $X$. Then $\bigcap\mathcal{S}$ is a band in $X$.
\end{lemma}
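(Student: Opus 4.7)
The plan is to reduce the claim to two standard properties of the disjoint-complement operation $M \mapsto M^{\t{d}}$: (a) it is inclusion-reversing, i.e.\ $A \sub B$ implies $B\dd \sub A\dd$; and (b) $A \sub A^{\t{dd}}$ for every $A \sub X$. Both follow directly from the set-theoretic definition of $M^{\t{d}}$ without requiring any additional hypothesis on $X$.

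Set $B := \bigcap \mathcal{S}$. First I would observe that $B$ is a linear subspace of $X$, being an intersection of linear subspaces. It remains to verify $B\dd = B$. The inclusion $B \sub B\dd$ is immediate from (b). For the reverse inclusion, fix any $S \in \mathcal{S}$. Since $B \sub S$, property (a) applied twice yields $B\dd \sub S\dd$, and since $S$ is a band we have $S\dd = S$. Therefore $B\dd \sub S$ for every $S \in \mathcal{S}$, hence $B\dd \sub \bigcap \mathcal{S} = B$. Combining both inclusions gives $B\dd = B$, so $B$ is a band.

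There is no real obstacle; the only nontrivial input is that $^{\t{d}}$ reverses inclusions, which is a one-line consequence of the definition. I would briefly justify it in passing (if $x \perp y$ for all $y \in B$, then in particular $x \perp y$ for all $y \in A$ whenever $A \sub B$) and not dwell on the linearity of the intersection.
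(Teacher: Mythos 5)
Your proof is correct and follows essentially the same route as the paper's: both establish $\bigcap\mathcal{S} \subseteq (\bigcap\mathcal{S})^{\mathrm{dd}}$ directly from the definition and obtain the reverse inclusion by noting that $\bigcap\mathcal{S} \subseteq S$ forces $(\bigcap\mathcal{S})^{\mathrm{dd}} \subseteq S^{\mathrm{dd}} = S$ for each band $S \in \mathcal{S}$. Your explicit isolation of the two monotonicity properties of the disjoint complement is just a slightly more verbose packaging of the identical argument.
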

\begin{proof}
Clearly, $(\bigcap\mathcal{S})\dd \supseteq \bigcap\mathcal{S}$. Conversely,
for every band $B\in\mathcal{S}$ we have $\bigcap\mathcal{S} \sub B$ and thus $(\bigcap\mathcal{S})\dd\sub B\dd =B$. Hence $(\bigcap\mathcal{S})\dd \sub \bigcap\mathcal{S}.$
\end{proof}

\begin{lemma}\label{0.0}
	Let $X$ be a pre-Riesz space, $M\sub X$ and  \begin{equation}
	\label{eq:generatedband}
	\mathcal{B}_M :=\bigcap \left\{B\sub X\mid| B \t{ is a band in } X \t{ with } M\sub B\right\}.
	\end{equation}
	Then $\mathcal{B}_M=M^{\mathrm{dd}}$.
\end{lemma}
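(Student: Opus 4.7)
The plan is to establish the two inclusions separately, relying on two facts already available in the excerpt: for any subset $N\sub X$, the disjoint complement $N^{\mathrm{d}}$ is a band, and disjointness is symmetric (so that $M\sub M^{\mathrm{dd}}$).

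For the inclusion $\mathcal{B}_M\sub M^{\mathrm{dd}}$, I would observe that $M^{\mathrm{dd}}$ is itself a band containing $M$: it is a band by the cited property applied to the set $M^{\mathrm{d}}$, and it contains $M$ because every $x\in M$ is disjoint to every element of $M^{\mathrm{d}}$ by the very definition of $M^{\mathrm{d}}$, hence $x\in M^{\mathrm{dd}}$. Thus $M^{\mathrm{dd}}$ appears in the family intersected in \eqref{eq:generatedband}, so $\mathcal{B}_M\sub M^{\mathrm{dd}}$.

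For the reverse inclusion $M^{\mathrm{dd}}\sub \mathcal{B}_M$, let $B$ be any band in $X$ with $M\sub B$. The elementary monotonicity of $(\cdot)^{\mathrm{d}}$ (which follows directly from its definition: if $N_1\sub N_2$, then $N_2^{\mathrm{d}}\sub N_1^{\mathrm{d}}$) yields $B^{\mathrm{d}}\sub M^{\mathrm{d}}$, and applying it again gives $M^{\mathrm{dd}}\sub B^{\mathrm{dd}}=B$, where the last equality holds because $B$ is a band. Since $B$ was arbitrary among the bands containing $M$, we conclude $M^{\mathrm{dd}}\sub \mathcal{B}_M$.

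There is no real obstacle here; the argument is essentially the standard vector-lattice proof that the band generated by a set is its double disjoint complement, transplanted to the pre-Riesz setting. The only points requiring slight care are that the definition of a band in \cite{1} (namely $B^{\mathrm{dd}}=B$) differs from the classical one, so $B^{\mathrm{dd}}=B$ must be invoked explicitly rather than tacitly, and that the inclusion $M\sub M^{\mathrm{dd}}$ rests on the symmetry of $\perp$ in ordered vector spaces rather than on lattice operations.
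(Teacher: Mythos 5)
Your proof is correct and follows essentially the same route as the paper: one inclusion by noting that $M^{\mathrm{dd}}$ is itself a band containing $M$ and hence a member of the intersected family, the other by antitonicity of the disjoint complement together with $B^{\mathrm{dd}}=B$ for any band $B\supseteq M$. The only difference is that you spell out the double application of antitonicity, which the paper states in one step.
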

\begin{proof}
As $M\dd$ is a band which contains the set $M$, we obtain $\mathcal{B}_M\subseteq M\dd$.
	
Conversely, let $x\in M\dd$. Let $B$ be a band in $X$ with $M\sub B$. Then we have  $M\dd \sub B\dd =  B$. Therefore $x\in B$. Consequently, $x\in \mathcal{B}_M$.
\end{proof}

For $M\subseteq X$ we call $\mathcal{B}_M$ as in \eqref{eq:generatedband} the \emph{band generated by} $M$.

The following notion of an ideal is introduced in \c[Definition~3.1]{vanGaa}. A subset $M$ of an ordered vector space $X$ is called \emph{solid} if for every $x\in X$ and $y\in M$ the relation
$\left\{\pm x\right\}^u \supseteq \left\{\pm y\right\}^u$ implies $x\in M$.
A solid subspace of $X$ is called an \emph{ideal}. This notion of an ideal coincides with the classical definition, provided $X$ is a vector lattice. In sharp contrast to the vector lattice setting, in an ordered vector space bands (and therefore ideals) need not be directed, see \c[Example~5.13]{2}.
\begin{lemma}\label{prelim.30a-intersection_of_ideals}
Let $X$ be an ordered vector space and $\mathcal{S}$ a non-empty set of ideals in $X$. Then $\bigcap\mathcal{S}$ is an ideal in $X$.
\end{lemma}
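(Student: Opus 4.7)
The plan is to mirror the structure of the earlier lemma about intersections of bands, since the proof is essentially a routine verification that the two defining properties of an ideal (being a linear subspace and being solid) are preserved under arbitrary non-empty intersections.

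First I would observe that $\bigcap \mathcal{S}$ is automatically a linear subspace of $X$, as it is the intersection of a non-empty collection of linear subspaces. So the only real content is the solidity.

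For solidity, I would fix $x \in X$ and $y \in \bigcap \mathcal{S}$ with $\{\pm x\}^u \supseteq \{\pm y\}^u$, and then argue pointwise across the family: for each ideal $I \in \mathcal{S}$, we have $y \in I$, and since $I$ is solid, the defining condition on upper bounds forces $x \in I$. Taking the intersection over all $I \in \mathcal{S}$ gives $x \in \bigcap \mathcal{S}$, as required.

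I do not anticipate any genuine obstacle here; the statement is a direct transfer of the solidity condition through the intersection, entirely analogous to the earlier lemma showing that the intersection of bands is a band. The only point worth being careful about is the convention that $\mathcal{S}$ is non-empty, which ensures $\bigcap \mathcal{S}$ is contained in $X$ rather than being taken to be all of $X$, but this hypothesis is already built into the statement.
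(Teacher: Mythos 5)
Your proposal is correct and follows essentially the same route as the paper: note that $\bigcap\mathcal{S}$ is a linear subspace, then verify solidity by fixing $x\in X$ and $y\in\bigcap\mathcal{S}$ with $\{\pm x\}^u\supseteq\{\pm y\}^u$ and applying solidity of each $I\in\mathcal{S}$ separately. No gaps.
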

\begin{proof}
Clearly, $\bigcap\mathcal{S}$ is a linear subspace of $X$. To  show that $\bigcap\mathcal{S}$ is solid,
 let $a\in\bigcap\mathcal{S}$ and $z\in X$ be such that $\left\{\pm z\right\}^u\supseteq\left\{\pm a\right\}^u$. For every $I\in\mathcal{S}$ we have $a\in I$ and, as $I$ is solid, also $z\in I$. We conclude $z\in \bigcap\mathcal{S}$.
\end{proof}
Let $X$ be an ordered vector space, $M\sub X$ and 
\[\mathcal{I}_M:=\bigcap \left\{I\sub X\mid| I \t{ is an ideal in } X \t{ with } M\sub I\right\}.\] We call $\mathcal{I}_M$ the \emph{ideal generated by} $M$.

By \c[Theorem~4.11]{7} a directed subspace of $X$ is an ideal if and only if it is solvex. Hereby, a set $M\sub X$ is called \emph{solvex} if for every $x\in X$, $x_1,\ldots,x_n\in M$ the inclusion
\[\left\{\pm x\right\}^u \supseteq \left\{\sum_{i=1}^n \epsilon_i \lambda_i x_i \mid| \lambda_1,\ldots,\lambda_n\in\left]0,1\right],\hs \sum_{i=1}^n\lambda_i =1,\hs \epsilon_1,\ldots,\epsilon_n\in\left\{\pm 1\right\}\right\}^u\]
implies $x\in M$, see \c[Definition~2.2]{9}. Every  solvex set is solid and convex, and the converse is true, provided $X$ is, in addition, a vector lattice, see \c[Lemma~2.3]{9}.

It is straightforward that the intersection of solvex sets is solvex. For a set $M\subseteq X$, the \emph{solvex hull} of $M$ is defined by $\t{solv}(M):=\bigcap\left\{S\sub X\mid| S \t{ solvex}, M\sub S\right\}$. 
A representation of  $\t{solv}(M)$ is given in \c[Lemma~3.4]{2}.
If $D$ is a linear subspace of $X$, then by \c[Lemma~3.5]{2} the solvex hull of $D$ is a solvex ideal. 
For a set $M\sub X$ the \emph{solvex ideal generated by} $M$ is given by
\[\I_M^{\t{solv}}:=\bigcap\left\{I\sub X\mid| I \t{ is a solvex ideal and } M\sub I\right\}.\]

\begin{lemma}\label{prelim.30a-solv_hull_and_solv_ideal}
Let $X$ be an ordered vector space and $M\sub X$  a non-empty subset. Then $\I_M^{\t{solv}} = \solv (\I_M)$.
\end{lemma}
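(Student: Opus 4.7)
The plan is to prove the two inclusions $\I_M^{\t{solv}} \subseteq \solv(\I_M)$ and $\solv(\I_M) \subseteq \I_M^{\t{solv}}$ separately, each by identifying the right-hand side as a member of the family of sets whose intersection defines the left-hand side.

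For the inclusion $\I_M^{\t{solv}} \subseteq \solv(\I_M)$, I would first observe that $\I_M$ is by definition an ideal, in particular a linear subspace of $X$. By the cited result [2, Lemma~3.5], the solvex hull of a linear subspace is a solvex ideal, so $\solv(\I_M)$ is a solvex ideal. Since $M \sub \I_M$ and trivially $\I_M \sub \solv(\I_M)$, the set $\solv(\I_M)$ is a solvex ideal containing $M$. Therefore it appears in the family whose intersection is $\I_M^{\t{solv}}$, which gives $\I_M^{\t{solv}} \sub \solv(\I_M)$.

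For the reverse inclusion $\solv(\I_M) \sub \I_M^{\t{solv}}$, I would use that every solvex ideal is in particular an ideal. Hence $\I_M^{\t{solv}}$ is an ideal containing $M$, and the definition of the ideal generated by $M$ yields $\I_M \sub \I_M^{\t{solv}}$. Since $\I_M^{\t{solv}}$ is also solvex, it is a solvex superset of $\I_M$, so by definition of $\solv(\I_M)$ as the intersection of all solvex sets containing $\I_M$, we obtain $\solv(\I_M) \sub \I_M^{\t{solv}}$.

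There is no real obstacle here beyond invoking [2, Lemma~3.5] at the right moment; both inclusions are obtained by the standard ``this set belongs to the intersecting family'' argument, once we know that the solvex hull of a linear subspace is automatically an ideal.
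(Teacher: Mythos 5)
Your proof is correct and follows essentially the same route as the paper: both inclusions hinge on \cite[Lemma~3.5]{2} to see that $\solv(\I_M)$ is a solvex ideal containing $M$, and on $\I_M^{\t{solv}}$ being a solvex set containing $\I_M$ (the paper phrases this last step as $\solv(\I_M)\sub\solv(\I_M^{\t{solv}})=\I_M^{\t{solv}}$, which is the same observation). No substantive difference.
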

\begin{proof}
As $\I_M$ is a linear subspace of $X$, $\t{solv}(\I_M)$ is a solvex ideal.
Since $\t{solv}(\I_M)$ contains $S$ and $\I_M^{\t{solv}}$ is the smallest solvex ideal containing $S$, it follows
$\I_M^{\t{solv}} \sub \solv(\I_M) \sub \solv(\I_M^{\t{solv}}) = \I_M^{\t{solv}}$.
\end{proof}

Recall that a linear map $i\colon X\rightarrow Y$, where $X$ and $Y$ are ordered vector spaces, is called \emph{bipositive} if for every $x\in X$ one has $i(x) \geq 0$ if and only if $x\geq 0$. An \emph{embedding} is a bipositive linear map, which implies injectivity. 
For an ordered vector space $X$, the following statements are equivalent, see \c[Corollaries~4.9-11 and Theorems~3.7, 4.13]{vanHaa}:
\begin{enumerate}
	\item\label{embedding.it1} There exist a vector lattice $Y$ and an embedding $i\colon X\rightarrow Y$ such that $i(X)$ is order dense in $Y$.
	\item\label{embedding.it2} There exist a vector lattice $\tilde{Y}$ and an embedding $i\colon X\rightarrow \tilde{Y}$ such that $i(X)$ is order dense in $\tilde{Y}$ and generates $\tilde{Y}$ as a vector lattice.
\end{enumerate}
If $X$ satisfies \ref{embedding.it1}, then $X$ is called a \emph{pre-Riesz space}, and $(Y,i)$ is called a \emph{vector lattice cover} of $X$. For an intrinsic definition of pre-Riesz spaces see \c{vanHaa}. If $X$ is a subspace of $Y$ and $i$ is the inclusion map, we write briefly $Y$ for $(Y,i)$.
As all spaces $\tilde{Y}$ in \ref{embedding.it2} are Riesz isomorphic, we call the pair $(\tilde{Y},i)$ the \emph{Riesz completion of} $X$ and denote it by $X^\rho$. 
The space $X^\rho$ is the smallest vector lattice cover of $X$ in the sense that every vector lattice cover $Y$ of $X$ contains a Riesz subspace that is Riesz isomorphic to $X^\rho$.

By \c[Theorem 17.1]{vanHaa} every Archimedean directed ordered vector space is a pre-Riesz space. Moreover, every pre-Riesz space is directed.
If $X$ is an Archimedean directed ordered vector space, then every vector lattice cover of $X$ is Archimedean.  
By \c[Chapter~X.3]{VulikhWeber} every Archimedean directed  ordered vector space $X$ has a unique  Dedekind completion, which we denote by $X^\delta$. Clearly, $X^\delta$ is a vector lattice cover of $X$.

Let $X$ be a pre-Riesz space and $(Y,i)$ a vector lattice cover of $X$.
The space $X$ is called \emph{pervasive in} $Y$, if for every $y\in Y_+$, $y\neq 0$, there exists $x\in X$ such that $0<i(x) \leq y$. By \c[Proposition~2.8.8]{PRS} the space $X$ is pervasive in $Y$ if and only if $X$ is pervasive in any vector lattice cover. So, $X$ is simply called \emph{pervasive}.
The next lemma which characterizes pervasive pre-Riesz spaces was shown in the master's thesis~\c[Theorem~4.15, Corollary~4.16]{Waaij}. It was originally formulated for the more special case of integrally closed pre-Riesz spaces. A short proof for Archimedean pre-Riesz spaces can be found in \c[Lemma~1]{02}.
\begin{lemma}\label{properties.1}
	For an Archimedean pre-Riesz space $X$ with a vector lattice cover $(Y,i)$ the following statements are equivalent.
	\begin{enumerate}
		\item\label{properties.1.it1} $X$ is pervasive.
		\item\label{properties.1.it3} For every $y\in Y_+$ with $y\neq 0$ we have
		$y = \sup\left\{x\in i(X) \mid| 0< x\leq y\right\}$.
	\end{enumerate}
\end{lemma}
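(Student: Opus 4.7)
The plan is to prove the two implications separately. The direction (ii)~$\Rightarrow$~(i) is essentially immediate: if $y\in Y_+$ with $y\neq 0$ satisfies $y=\sup\left\{x\in i(X)\mid| 0<x\leq y\right\}$, then the indexing set must be non-empty (otherwise the supremum cannot equal the non-zero $y$), and after pulling back through the bipositive map $i$ this is exactly the pervasiveness of $X$. For the converse (i)~$\Rightarrow$~(ii), fix $y\in Y_+$ with $y\neq 0$ and set $A:=\left\{x\in i(X)\mid| 0<x\leq y\right\}$. Clearly $y$ is an upper bound of $A$, so only the least-upper-bound property remains. Let $b\in Y$ be an arbitrary upper bound of $A$. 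By pervasiveness $A$ already contains a strictly positive element, so necessarily $b>0$, and hence $y\wedge b\geq 0$ in the vector lattice $Y$. Define $u:=y-y\wedge b=(y-b)^+\in Y_+$; note $u\leq y$ since $y\wedge b\geq 0$. The desired inequality $y\leq b$ is equivalent to $u=0$, so the task reduces to showing $u=0$.

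Suppose for contradiction that $u\neq 0$. By pervasiveness there exists $x\in X$ with $0<i(x)\leq u$. I claim that $n\cdot i(x)\leq y\wedge b$ for every $n\in\NN$, which will contradict the Archimedean property of $Y$ (which $Y$ inherits because $X$ is an Archimedean directed ordered vector space). The base case follows from $i(x)\leq u\leq y$, placing $i(x)$ in $A$, so that $i(x)\leq b$ and therefore $i(x)\leq y\wedge b$. For the inductive step, if $n\cdot i(x)\leq y\wedge b$, then
\[(n+1)\cdot i(x)=n\cdot i(x)+i(x)\leq y\wedge b+u=y,\]
so $i\bigl((n+1)x\bigr)\in A$, which gives $(n+1)\cdot i(x)\leq b$ and hence $(n+1)\cdot i(x)\leq y\wedge b$.

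The main obstacle is exactly this bootstrapping step: a single application of pervasiveness produces only one positive element below $u$, whereas to invoke Archimedeanness of $Y$ one needs an unbounded multiple of $i(x)$ trapped below $y\wedge b$. The trick is the identity $y\wedge b+u=y$, which recycles each new multiple $(n+1)\cdot i(x)$ back into $A$ so that the upper-bound hypothesis on $b$ can be reapplied at every stage. Once $u=0$ is forced, $y=y\wedge b\leq b$, establishing that $y$ is the least upper bound of $A$ and completing the proof.
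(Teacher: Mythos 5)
Your proof is correct, and it follows essentially the standard argument: the paper itself does not prove this lemma but cites van Waaij's thesis and \cite[Lemma~1]{02}, and the short proof given there proceeds exactly by your device of writing $u=(y-b)^+$, pulling a positive $i(x)\leq u$ out of pervasiveness, and bootstrapping $n\cdot i(x)\leq y\wedge b$ via $y\wedge b+u=y$ until the Archimedean property of $Y$ (inherited from $X$, as noted in the preliminaries) forces $u=0$. No gaps; the easy direction and the verification that $b>0$ ensures $y\wedge b\geq 0$ are both handled correctly.
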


A pre-Riesz space $X$ with vector lattice cover $(Y,i)$ is called \emph{fordable in} $Y$, if for every $y\in Y_+$ there exists a set $S\sub X$ such that $\left\{y\right\}^{\t{d}} = i(S)^{\t{d}}$ in $Y$. By \c[Proposition~4.1.18]{PRS} the space $X$ is fordable in $Y$ if and only if $X$ is fordable in any vector lattice cover. So, $X$ is simply called \emph{fordable}. By \c[Lemma~2.4]{3} every pervasive pre-Riesz space is fordable.
For $S\sub Y$ we write  $[S]i:=\left\{x\in X\mid| i(x)\in S\right\}$.  The next lemma is established in \cite[Proposition~3]{02}.

\begin{lemma}\label{closedness.1000}
	Let $X$ be a fordable pre-Riesz space and $(Y,i)$ a vector lattice cover of $X$. Let $S\sub X$. Then
	\[[i(S)\dd]i = S\dd.\]
\end{lemma}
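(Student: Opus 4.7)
I would verify the two inclusions of $[i(S)\dd]i = S\dd$ separately. The inclusion $[i(S)\dd]i \subseteq S\dd$ is immediate from the preservation of disjointness across an order dense embedding (Proposition~2.1(ii) of \c{1}); no fordability is needed here. Specifically, if $i(x) \in i(S)\dd$ and $t \in S^{\t{d}}$, then $i(t) \perp i(s)$ for every $s \in S$, so $i(t) \in i(S)^{\t{d}}$; hence $i(x) \perp i(t)$ in $Y$, which transfers back to $x \perp t$ in $X$. Since $t \in S^{\t{d}}$ was arbitrary, $x \in S\dd$.

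For the reverse inclusion $[i(S)\dd]i \supseteq S\dd$, suppose $x \in S\dd$ and pick any $y \in i(S)^{\t{d}}$. Since $i(S)^{\t{d}}$ is a band in the vector lattice $Y$ and hence an ideal, it contains $|y|$ and therefore also $y^+$ and $y^-$. As disjointness with $y$ in a vector lattice is equivalent to disjointness with $|y|$, it suffices to prove $i(x) \perp y$ under the assumption $y\geq 0$. By fordability, choose $T \sub X$ with $\{y\}^{\t{d}} = i(T)^{\t{d}}$. From $y \in i(S)^{\t{d}}$ one obtains, for each $s \in S$, $i(s) \in \{y\}^{\t{d}} = i(T)^{\t{d}}$, whence $i(s) \perp i(t)$ and thus $s \perp t$ in $X$ for every $t \in T$; so $T \sub S^{\t{d}}$. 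Since $x \in S\dd$, we get $x \perp t$ in $X$, i.e.\ $i(x) \perp i(t)$ in $Y$, for every $t \in T$. Therefore $i(x) \in i(T)^{\t{d}} = \{y\}^{\t{d}}$, yielding $i(x) \perp y$. As $y \in i(S)^{\t{d}}$ was arbitrary, $i(x) \in i(S)\dd$, i.e.\ $x \in [i(S)\dd]i$.

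The main obstacle, and the precise place where fordability is indispensable, is the second inclusion: membership $x \in S\dd$ provides disjointness information for $x$ only against elements of $X$, while to conclude $i(x) \in i(S)\dd$ we must verify disjointness of $i(x)$ against arbitrary $y$ in the strictly larger space $Y$. Fordability is the tool that rewrites $\{y\}^{\t{d}}$ as $i(T)^{\t{d}}$ for a suitable $T\sub X$, allowing the argument to be routed back through disjointness relations that are available inside $X$ via $S\dd$.
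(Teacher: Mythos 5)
Your proof is correct. The paper does not prove this lemma itself but cites it as \cite[Proposition~3]{02}; your argument --- transferring disjointness across the order dense embedding for the inclusion $[i(S)\dd]i \sub S\dd$, and for the reverse inclusion reducing to $y\geq 0$ via $|y|$ and then using fordability to rewrite $\left\{y\right\}^{\t{d}}$ as $i(T)^{\t{d}}$ with $T\sub S^{\t{d}}$ --- is the standard route and every step checks out.
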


Next we discuss the restriction property and the extension property for ideals and bands.
Let $X$ be a pre-Riesz space and $(Y,i)$ a vector lattice cover of $X$. 
The pair $(L,M)\subseteq \mathcal{P}(X)\times \mathcal{P}(Y)$
is said to satisfy  
\begin{itemize}
	\item[-]
	the \emph{restriction property} (R), if whenever $J\in M$,
	then $[J]i\in L$, and
	\item[-]
	the \emph{extension property} (E), if whenever $I\in L$,
	then there is $J\in M$ such that 
	$I=[J]i$.
\end{itemize}
In \c{1} the properties (R) and (E) are investigated for bands, solvex ideals and ideals.
More precisely, it is shown 
that the extension property (E) is satisfied for  bands, i.e.\ for the pair $(L,M)$ where $L$ is the set of all bands in $X$ and $M$ is the set of all bands in $Y$.
Moreover, the restriction property (R) is satisfied for ideals. In general, bands do not have (R) and ideals do not have (E). 
The pair $(L,M)$, where $L$ and $M$ are the sets of all solvex ideals in $X$ and $Y$, respectively, satisfies both (E) and (R). If $X$ is fordable, then by \c[Proposition~2.5 and Theorem~2.6]{3} we have (R) for bands.

If for an ideal $I$ in $X$ and an ideal $J$ in $Y$ we have $I=[J]i$, then $J$ is called an \emph{extension ideal} of $I$. An \emph{extension band} $J$ for a band $I$ in $X$ is defined similarly. Extension ideals and bands are not unique, in general.
If an ideal $I$ in $X$ has an extension ideal in $Y$, then
\[\hat{I}:=\bigcap\left\{J\subseteq Y \mid| J \t{ is an extension ideal of } I\right\}\]
is the smallest extension ideal of $I$ in $Y$. Similarly, for every band $B$ in $X$ the smallest extension band $\hat{B}$ is defined.
For \ref{properties.-1.it1} in the next proposition see \c[Proposition 5.6]{2}, for \ref{properties.-1.it2} see \c[Proposition 17 (a)]{6} and its subsequent discussion.
\begin{proposition}\label{properties.-1}
Let $X$ be a pre-Riesz space and $(Y,i)$ a vector lattice cover of $X$.
\begin{enumerate}
\item\label{properties.-1.it1} For a solvex ideal $I$ in $X$ we have $\hat{I}=\I_{i(I)}$ in $Y$.
\item\label{properties.-1.it2} For a band $B$ in $X$ we have $\hat{B}=\B_{i(B)}$ in $Y$.
\end{enumerate}
\end{proposition}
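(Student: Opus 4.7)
For both parts I would follow the same template: first show that the right-hand candidate is an extension ideal (respectively, band) of $I$ (respectively, $B$), then combine this with the definition of $\hat{I}$ (respectively, $\hat{B}$) as the smallest such. Any extension ideal $J\sub Y$ of $I$ is an ideal containing $i(I)$, hence contains the generated ideal $\I_{i(I)}$; intersecting over all such $J$ yields $\hat{I}\supseteq \I_{i(I)}$. Conversely, once $\I_{i(I)}$ itself is shown to be an extension ideal of $I$, minimality forces $\hat{I}\sub \I_{i(I)}$. The argument for bands is parallel, with Lemma~\ref{0.0} identifying $\B_{i(B)}$ with $i(B)\dd$; existence of $\hat{I}$ and $\hat{B}$ is guaranteed by the extension property~(E) for solvex ideals and for bands, respectively, recorded earlier in the section.

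\textbf{Part (ii).} By Lemma~\ref{0.0}, $\B_{i(B)}=i(B)\dd$, which is a band in $Y$. The non-trivial inclusion to verify is $[i(B)\dd]i\sub B$. Given $x\in X$ with $i(x)\in i(B)\dd$, the plan is to deduce $x\in B\dd=B$. Pick any $y\in B^{\t{d}}$; the coincidence of disjointness for order dense subspaces (Proposition~2.1(ii) of~\cite{1}) gives $i(y)\perp i(b)$ in $Y$ for every $b\in B$, so $i(y)\in i(B)^{\t{d}}$. Since $i(x)\in i(B)\dd$, we obtain $i(x)\perp i(y)$ in $Y$, and the same coincidence transfers this to $x\perp y$ in $X$. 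Hence $x\in B\dd=B$, completing the inclusion.

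\textbf{Part (i).} The task reduces to proving $[\I_{i(I)}]i\sub I$. Let $x\in X$ with $i(x)\in \I_{i(I)}$ and invoke the standard vector-lattice description of a generated ideal to obtain $|i(x)|\leq \sum_{k=1}^n \mu_k |i(a_k)|$ for some $a_k\in I$ and $\mu_k\geq 0$. Absorbing the $\mu_k$'s into the $a_k$'s (using that $I$ is a linear subspace) and rescaling by $b_k:=n a_k\in I$, this becomes $|i(x)|\leq (1/n)\sum_{k=1}^n |i(b_k)|$ in $Y$. Now for any $u\in X$ that majorises every signed convex combination $\sum_k \epsilon_k\lambda_k b_k$ with $\lambda_k\in(0,1]$, $\sum_k\lambda_k=1$, $\epsilon_k\in\{\pm 1\}$, the image $i(u)$ dominates the finite set $\{\sum_k (1/n)\epsilon_k i(b_k):\epsilon\in\{\pm 1\}^n\}$ in $Y$; its supremum in the lattice $Y$ equals $(1/n)\sum_k |i(b_k)|$, which is at least $|i(x)|$, so $i(u)\geq i(x)$ and $i(u)\geq -i(x)$. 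Bipositivity of $i$ gives $u\in\{\pm x\}^u$, and the solvex condition applied to $b_1,\ldots,b_n\in I$ then yields $x\in I$.

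\textbf{Main obstacle.} The delicate step lies in part~(i): one must turn the lattice inequality $|i(x)|\leq \sum_k \mu_k |i(a_k)|$ inside $Y$ into the precise convex-combination shape the solvex axiom demands back in $X$. The rescaling $b_k = n a_k$ and the specialisation $\lambda_k = 1/n$ are calibrated to satisfy both $\lambda_k\in(0,1]$ and $\sum_k\lambda_k=1$ simultaneously, after which the vector-lattice identity $\sup_\epsilon \sum_k \lambda_k\epsilon_k y_k = \sum_k \lambda_k|y_k|$ available only in $Y$ delivers the required upper bound, which bipositivity then pulls back to $X$. Part~(ii) is comparatively smooth because it only relies on the coincidence of disjointness for order dense subspaces.
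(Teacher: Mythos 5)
Your proof is correct: the minimality argument via the intersection defining $\hat{I}$ and $\hat{B}$, the disjointness-transfer argument showing $[i(B)\dd]i\sub B\dd=B$, and the reduction of $|i(x)|\leq\tfrac{1}{n}\sum_{k}|i(b_k)|$ to the solvex condition via the specialisation $\lambda_k=1/n$ and the lattice identity $\sup_{\epsilon}\sum_k\epsilon_k z_k=\sum_k|z_k|$ all go through. Note that the paper offers no proof of this proposition, citing instead \cite{2} and \cite{6}; your argument is the natural self-contained reconstruction of those results and matches the route one would expect there.
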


The following well-known property of ideals can be found in \c[Theorem~IV.1.2]{Vulikh_en}.
\begin{lemma}\label{properties.0}
Let $X$ be a Dedekind complete vector lattice and $I\sub X$ an ideal. Then $I$ is a Dedekind complete sublattice of $X$.
\end{lemma}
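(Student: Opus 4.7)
The plan is to prove the two claims in turn: first that $I$ is a sublattice, then that suprema of bounded sets in $I$ exist inside $I$.

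For the sublattice part, I would use the fact that in a vector lattice the relation $\{\pm x\}^u\supseteq\{\pm y\}^u$ is equivalent to $|x|\le |y|$. So solidity of $I$ together with $|\,|x|\,| = |x|$ immediately gives that $x\in I$ implies $|x|\in I$. Combined with linearity, the standard formula
\[
x\vee y = \tfrac{1}{2}\bigl(x+y+|x-y|\bigr), \qquad x\wedge y = \tfrac{1}{2}\bigl(x+y-|x-y|\bigr)
\]
yields $x\vee y,\,x\wedge y\in I$ whenever $x,y\in I$, so $I$ is a sublattice of $X$.

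For Dedekind completeness, let $A\sub I$ be non-empty and bounded above in $I$ by some $b\in I$. Since $X$ is Dedekind complete, $s:=\sup_X A$ exists in $X$ and satisfies $a\leq s\leq b$ for every $a\in A$. Fix any $a_0\in A$; then $0\leq s-a_0\leq b-a_0$, and $b-a_0\in I$ because $I$ is a linear subspace. The key step is to apply solidity: in the vector lattice $X$ we have $|s-a_0|=s-a_0\leq b-a_0 = |b-a_0|$, hence $\{\pm(s-a_0)\}^u\supseteq \{\pm(b-a_0)\}^u$, so $s-a_0\in I$, and therefore $s=(s-a_0)+a_0\in I$. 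Finally, any upper bound of $A$ in $I$ is also an upper bound in $X$, hence dominates $s$, so $s=\sup_I A$ as well.

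There is really no hard step here; the only thing to be careful about is that the paper uses the upper-bound formulation of solidity, so one must translate $\{\pm x\}^u\supseteq\{\pm y\}^u$ into the familiar $|x|\leq|y|$ in the vector-lattice setting before invoking it. Once that translation is made, both parts reduce to the textbook argument, and the conclusion follows.
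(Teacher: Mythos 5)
Your proof is correct. Note that the paper does not prove this lemma at all --- it is stated as a well-known fact with a citation to Vulikh --- and your argument is exactly the standard one: translating the upper-bound form of solidity into $|x|\leq|y|$ in the vector-lattice setting, getting the sublattice property from $x\vee y=\tfrac12\bigl(x+y+|x-y|\bigr)$, and showing that $s=\sup_X A$ already lies in $I$ via $0\leq s-a_0\leq b-a_0$ together with solidity, so that $s$ is also the supremum within $I$.
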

\section{Representation of directed ideals generated by a set}

In this section $X$ is a pre-Riesz space. 
We intend to give an explicit formula for the ideal generated by a set of positive elements.
This formula is similar to the vector lattice case.
\begin{proposition}\label{prelim.30a}
Let $X$ be a pre-Riesz space and $S\sub X_+$ a non-empty subset. Then the ideal $\I_S$ generated by $S$ is directed and
\[\I_S=\left\{x\in X\mid| \exists x_1,\ldots,x_n \in S \hs\exists\lambda\in\RR_{\geq 0} \colon \pm x \leq\lambda \sum_{i=1}^n x_i\right\}.\]
\end{proposition}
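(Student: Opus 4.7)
Let me denote the right-hand side by
\[J := \left\{x\in X\mid \exists x_1,\dots,x_n\in S,\ \exists\lambda\geq 0\colon \pm x \leq \lambda\sum_{i=1}^n x_i\right\}.\]
The plan is to show, in this order, that $J$ is a linear subspace of $X$ containing $S$, that $J$ is solid (and hence an ideal), that $J$ is directed, and finally that $J=\I_S$ by minimality arguments. The notion of solidity in terms of upper-bound sets $\{\pm x\}^u$ is what drives everything, so the bookkeeping of these sets will be the main (if modest) technical point; there is no deep obstacle here.

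First I would verify that $J$ is a linear subspace. Closure under scalar multiplication is immediate from $\pm(\mu x)\leq |\mu|\lambda\sum x_i$. Closure under addition follows by concatenating the witnesses: if $\pm x\leq \lambda\sum_{i} x_i$ and $\pm y\leq \mu\sum_{j} y_j$, then $\pm(x+y)\leq \lambda\sum_i x_i+\mu\sum_j y_j$, which can be rewritten as $\nu\sum_k z_k$ for $\nu=\max(\lambda,\mu)$ and the concatenated list $z_k$. For $S\subseteq J$: any $s\in S$ satisfies $-s\leq 0\leq s$, so $\pm s\leq 1\cdot s$.

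Next I would check solidity, which is the step where the $\{\pm\cdot\}^u$ formalism matters. Suppose $y\in J$ with witnesses $\pm y\leq u:=\lambda\sum x_i$, and let $x\in X$ satisfy $\{\pm x\}^u\supseteq\{\pm y\}^u$. Since $u\geq y$ and $u\geq -y$, the element $u$ lies in $\{\pm y\}^u$, hence in $\{\pm x\}^u$; thus $\pm x\leq u$, so $x\in J$ with the same witnesses $x_1,\dots,x_n,\lambda$. Therefore $J$ is an ideal. For directedness, given $x,y\in J$ with witnesses as above, the element $z:=\lambda\sum x_i+\mu\sum y_j$ is nonnegative, majorizes both $x$ and $y$, and lies in $J$ (take $\nu=\max(\lambda,\mu)$ with the concatenated list, noting $-z\leq 0\leq \nu\sum z_k$ and $z\leq \nu\sum z_k$).

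Finally, to conclude $J=\I_S$: the inclusion $\I_S\subseteq J$ follows from the definition of $\I_S$ as the intersection of all ideals containing $S$, since $J$ is one such ideal. Conversely, for $x\in J$ with $\pm x\leq u$ where $u=\lambda\sum x_i$, note that $u\in\I_S$ because $\I_S$ is a linear subspace containing $S$ and $\lambda\geq 0$. Since $u\geq 0$, one checks $\{\pm x\}^u\supseteq\{z\in X\colon z\geq u\}=\{\pm u\}^u$, so solidity of $\I_S$ yields $x\in\I_S$. This gives $J\subseteq\I_S$ and completes the proof; directedness of $\I_S$ is then inherited from $J$.
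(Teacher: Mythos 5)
Your proposal is correct and follows essentially the same route as the paper's proof: show the right-hand side is a solid linear subspace containing $S$, use the witness $u=\lambda\sum x_i$ and the inclusion of upper-bound sets to get solidity and the inclusion into $\I_S$, and conclude by minimality. The only cosmetic difference is that you establish directedness via common upper bounds while the paper writes $x=z-(z-x)$ to show the positive cone is generating; these are equivalent.
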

\begin{proof}
Let
\[A: = \left\{x\in X\mid| \exists x_1,\ldots,x_n \in S \hs\exists\lambda\in\RR_{\geq 0} \colon \pm x \leq\lambda \sum_{i=1}^n x_i\right\}.\]
It is easy to see that $S\sub A$ holds and the set $A$ is a linear subspace. To see that $A$ is solid, let $x\in X$ be such that there exists an $a\in A$ with $\left\{\pm x\right\}^u \supseteq \left\{\pm a\right\}^u$.  Since $a\in A$, there exist $x_1,\ldots, x_n\in S$ and a $\lambda\in\RR_{\geq 0}$ with $\pm a \leq\lambda \sum_{i=1}^n x_i$, i.e.\ $z:=\lambda \sum_{i=1}^n x_i \in \left\{\pm a\right\}^u$. It follows $z\in\left\{\pm x\right\}^u$, i.e.\ $\pm x\leq \lambda \sum_{i=1}^n x_i$. That is, $x\in A$. We conclude that $A$ is solid.

Next we show that $A$ is directed. Let $x\in A$. Then there are  $x_1,\ldots, x_n\in S$ and a $\lambda\in\RR_+$ with $\pm x \leq\lambda\sum_{i=1}^n x_i =:z$. 
Clearly, $z\in A_+$. Thus $x=z-(z-x)$ is a decomposition of the element $x$ into two positive elements of $A$, i.e.\ $A$ is directed.

We show $A=\I_S$. For the inclusion $A\sub \I_S$, let $x\in A$ and $z:=\lambda\sum_{i=1}^n x_i$ be as above. Notice that $z\in \I_S$. As $\left\{\pm x\right\}^u\supseteq \left\{z\right\}^u=\left\{\pm z\right\}^u$, due to $\I_S$ being solid it follows $x\in\I_S$. The converse inclusion $\I_S\sub A$ is immediate, since $A$ is an ideal containing the set $S$. Altogether, we have $A=\I_S$.
\end{proof}
\begin{corollary}\label{corsomething}
Let $X$ be a pre-Riesz space and $S$ a directed linear subspace of $X$. Then the ideal $\I_S$ is directed.
\end{corollary}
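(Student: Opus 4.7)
The plan is to reduce the statement to Proposition \ref{prelim.30a} by passing from $S$ to its positive part $S_+:=S\cap X_+$. Since $S$ is a directed linear subspace, every $s\in S$ decomposes as $s=s_1-s_2$ with $s_1,s_2\in S_+$, i.e.\ $S=S_+-S_+$.

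First I would show the key identity $\I_S=\I_{S_+}$. The inclusion $\I_{S_+}\subseteq\I_S$ is immediate from $S_+\subseteq S$ and the minimality of the generated ideal. For the reverse inclusion, note that $\I_{S_+}$ is in particular a linear subspace of $X$ containing $S_+$, so by the decomposition $S=S_+-S_+$ we have $S\subseteq\I_{S_+}$, and minimality gives $\I_S\subseteq\I_{S_+}$.

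Now $S_+\subseteq X_+$ is non-empty (it contains $0$, and if $S\neq\{0\}$ it contains non-zero elements by directedness), so Proposition \ref{prelim.30a} applies and yields that $\I_{S_+}$ is directed. Combined with $\I_S=\I_{S_+}$, this concludes the proof. In the trivial case $S=\{0\}$, the ideal $\I_S=\{0\}$ is directed automatically.

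There is no real obstacle here; the corollary is essentially a one-line consequence of Proposition \ref{prelim.30a} once one observes that generating an ideal is insensitive to replacing a directed subspace by its positive cone.
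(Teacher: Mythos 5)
Your proof is correct and follows exactly the paper's route: reduce to Proposition~\ref{prelim.30a} via the identity $\I_S=\I_{S_+}$, which holds because a directed subspace satisfies $S=S_+-S_+$. You merely spell out the two inclusions that the paper leaves implicit; there is nothing to add or fix.
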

\begin{proof}
Due to $S=S_+-S_+$ and $\I_{S_+}=\I_S$ by Proposition~\ref{prelim.30a} we obtain that $\I_S$ is directed.
\end{proof}

Corollary~\ref{corsomething} leads to the question whether an analogous statement is true for a band $B$ in $X$. That is, if $B$ is generated by a directed linear subspace, is then $B$ likewise directed?
The following example shows that the answer is negative even if $X$ is pervasive, has the RDP and an order unit.
\begin{example}\label{Namioka}
\textit{In a pervasive pre-Riesz space with RDP and an order unit a directed ideal might generate a non-directed band.}

The ordered vector space 
\[X:=\left\{f\in C[-1,1] \mid| f(0) = \frac{f(-1)+f(1)}{2}\right\}\]
endowed with pointwise order
is first considered by Namioka in \c[Example~8.10]{Namioka1957}, where it is shown that $X$ is not a vector lattice, but has the RDP (see also \c[V.2, Beispiel 9]{VulikhWeber}). From \c[Example 18]{6} it follows that $X$ is an Archimedean pre-Riesz space and $C[0,1]$ is a vector lattice cover of $X$. In \c[Example~12]{02} it is established that $X$ is pervasive.
Moreover, it is shown that the ideal
\[I:=\left\{f\in X \mid| f([-\textstyle{\frac{1}{2}},0]\cup\left\{-1,1\right\}) = 0 \right\}\]
in $X$ is not a band. Clearly, $I$ is directed. 
For the band generated by $I$ we have $\B_I=I\dd=\left\{f\in X \mid| f([-\textstyle{\frac{1}{2}},0]) = 0\right\}$, which is not directed by \c[Example~12]{02}.
\end{example}

\section{Extension of ideals and bands generated by sets}

In this section $X$, is a pre-Riesz space and $(Y,i)$ a vector lattice cover of $X$. For a set $S\sub X$ we intend to relate $\I_S$ in $X$ to $\I_{i(S)}$ in $Y$, and, similarly, $\B_S$ in $X$ to $\B_{i(S)}$ in $Y$. These results will be used in Section~\ref{extensions}.

\begin{proposition}\label{inclusion}
Let $X$ be a pre-Riesz space and $(Y,i)$ a vector lattice cover of $X$. Let $S\sub X$ be a non-empty subset. Then $i(\I_S)\sub\I_{i(S)}\cap i(X)$.
\end{proposition}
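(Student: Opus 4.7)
The plan is to reduce the inclusion to the statement $\I_S \subseteq [\I_{i(S)}]i$ and then exploit the restriction property (R) for ideals, which is available unconditionally in pre-Riesz spaces.

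First I would observe that $i(\I_S) \subseteq i(X)$ is trivial, so the content of the proposition is the inclusion $i(\I_S) \subseteq \I_{i(S)}$. By the definition of preimage under $i$, this is equivalent to showing that $\I_S \subseteq [\I_{i(S)}]i$, where $[\,\cdot\,]i$ denotes the preimage under $i$ as introduced before Lemma~\ref{closedness.1000}.

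Next I would invoke the restriction property (R) for ideals (recalled in Section~2 after the definition of (R) and (E)): since $\I_{i(S)}$ is an ideal in $Y$, the preimage $[\I_{i(S)}]i$ is an ideal in $X$. This ideal contains $S$, because for every $s \in S$ we have $i(s) \in i(S) \subseteq \I_{i(S)}$, hence $s \in [\I_{i(S)}]i$. Since $\I_S$ is by definition the smallest ideal in $X$ containing $S$, it follows that $\I_S \subseteq [\I_{i(S)}]i$, which gives $i(\I_S) \subseteq \I_{i(S)}$ and finishes the proof.

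There is no real obstacle here; the entire argument rides on the fact that the preimage of an ideal under a bipositive map into a vector lattice cover is again an ideal, which is precisely the content of the restriction property (R) for ideals. No assumption on $S$ (beyond being non-empty) and no extra structural property of $X$ (such as fordability or pervasiveness) is needed for this direction, which is consistent with the fact that the converse inclusion, to be treated in the subsequent results, is where the stronger hypotheses will enter.
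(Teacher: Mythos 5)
Your proof is correct, and it is a genuinely more direct route than the one in the paper. You apply the restriction property (R) for ideals to the single ideal $\I_{i(S)}$ in $Y$: its preimage $[\I_{i(S)}]i$ is an ideal in $X$ containing $S$, hence contains $\I_S$ by minimality, and the claim follows. The paper instead argues elementwise through the families $\mathcal{R}$ and $\mathcal{S}$ of all ideals, respectively all solvex ideals, in $X$ containing $S$, and passes from $\bigcap_{I\in\mathcal{S}}i(I)$ to $\I_{i(S)}$ via the characterization of solvex ideals as exactly those ideals admitting an extension ideal in $Y$ --- which, unwound, amounts to restricting each ideal $J\supseteq i(S)$ of $Y$ back to $X$, i.e.\ to the same restriction fact you invoke, applied across a whole family rather than to one ideal. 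So the underlying mechanism (preimages of ideals under $i$ are ideals) is shared, but your argument skips the detour through solvexity and the extension property entirely, making it shorter and making transparent that no hypothesis on $S$ or on $X$ beyond pre-Riesz is used; the paper's phrasing has the mild advantage of foreshadowing the role that solvexity and extension ideals play in the subsequent results (Proposition~\ref{genideal=erwideal} and Theorem~\ref{something}), where the reverse inclusion is established under the additional assumption $S\sub X_+$.
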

\begin{proof}
Let $x\in\I_S$. It is sufficient to show $i(x)\in\I_{i(S)}$. Define the two sets 
\[\mathcal{R}:=\left\{I\sub X\mid| I \t{ ideal and }S\sub I\right\} \hs\hs\t{and}\hs\hs \mathcal{S}:=\left\{I\sub X\mid| I \t{ solvex ideal and }S\sub I\right\}.\] By the definition of $\I_S$ we have $x\in\bigcap \mathcal{R}$.
As an ideal is solvex if and only if it has an extension ideal in $Y$, we obtain
\[i(x)\in i\left(\bigcap_{I\in\mathcal{R}}I\right) \hs\hs=\hs\hs \bigcap_{I\in\mathcal{R}}i(I) \hs\hs \sub\hs\hs  \bigcap_{I\in\mathcal{S}}i(I)\hs\hs\sub \bigcap_{\substack{J\t{ ideal in } Y,\\ i(S)\sub J}} J  \hs\hs=\hs\hs \I_{i(S)}.\]
\end{proof}

If in the setting of Proposition~\ref{inclusion} we have $S\sub X_+$, then the subsequent result establishes, in particular, the equality $i(\I_S)=\I_{i(S)}\cap i(X)$.
\begin{proposition}\label{genideal=erwideal}
Let $X$ be a pre-Riesz space and $(Y,i)$ a vector lattice cover of $X$. Let $S\sub X_+$ be a non-empty subset. Then the smallest extension ideal of the ideal $\I_S$ is the ideal $\I_{i(S)}$ in $Y$.
\end{proposition}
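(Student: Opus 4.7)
The plan is to deduce the claim as a short consequence of Proposition~\ref{properties.-1}\ref{properties.-1.it1} combined with Proposition~\ref{inclusion}. First, I need the smallest extension ideal $\hat{\I_S}$ to exist, which requires $\I_S$ to be solvex. By Proposition~\ref{prelim.30a}, $\I_S$ is directed (we are given $S\subseteq X_+$, which is exactly the hypothesis of that proposition), and by the cited \cite[Theorem~4.11]{7} every directed ideal is solvex. Hence $\I_S$ is solvex, and Proposition~\ref{properties.-1}\ref{properties.-1.it1} applies to give
\[
\hat{\I_S} \;=\; \I_{i(\I_S)} \quad \text{in } Y.
\]

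It therefore remains to establish the identity $\I_{i(\I_S)} = \I_{i(S)}$ of ideals in the vector lattice $Y$. The inclusion $\I_{i(S)} \subseteq \I_{i(\I_S)}$ is immediate from $S \subseteq \I_S$: then $i(S)\subseteq i(\I_S)$, so the smallest ideal in $Y$ containing $i(S)$ lies inside the smallest ideal containing $i(\I_S)$.

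For the reverse inclusion $\I_{i(\I_S)} \subseteq \I_{i(S)}$ I plan to invoke Proposition~\ref{inclusion}, which supplies exactly $i(\I_S) \subseteq \I_{i(S)}$. Since $\I_{i(S)}$ is an ideal of $Y$ containing $i(\I_S)$ and $\I_{i(\I_S)}$ is by definition the smallest such ideal, the desired inclusion follows. Combining the two inclusions yields $\I_{i(\I_S)} = \I_{i(S)}$, and chaining with Proposition~\ref{properties.-1}\ref{properties.-1.it1} finishes the proof.

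There is no real obstacle here: both heavy-lifting facts, namely that a directed (hence solvex) ideal has its smallest extension ideal given by $\I_{i(\I_S)}$, and the set-theoretic inclusion $i(\I_S)\subseteq\I_{i(S)}$, are already in place. The only delicate point to mention is that the assumption $S\subseteq X_+$ is genuinely needed, since it is what forces $\I_S$ to be directed (and therefore solvex) so that Proposition~\ref{properties.-1}\ref{properties.-1.it1} is applicable.
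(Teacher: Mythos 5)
Your proof is correct, but it takes a genuinely different route from the paper's. The paper verifies directly that $\I_{i(S)}$ is an extension ideal of $\I_S$: one inclusion comes from Proposition~\ref{inclusion}, and the reverse inclusion $[\I_{i(S)}]i\sub\I_S$ is obtained by writing out the explicit description of the ideal generated by $i(S)$ in the vector lattice $Y$ (the elements $y$ with $|y|\leq\lambda\sum_{k}i(s_k)$ for some $s_1,\ldots,s_n\in S$), pulling the inequality back through the bipositive map $i$, and matching the result against the explicit formula for $\I_S$ from Proposition~\ref{prelim.30a}; minimality then follows because every extension ideal must contain $i(S)$. You instead route everything through the abstract identity $\hat{I}=\I_{i(I)}$ for solvex ideals (Proposition~\ref{properties.-1}) and reduce the claim to the equality $\I_{i(\I_S)}=\I_{i(S)}$, which you obtain from $S\sub\I_S$ on one side and from Proposition~\ref{inclusion} together with minimality of the generated ideal on the other. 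Both arguments are sound, non-circular (Propositions~\ref{prelim.30a}, \ref{inclusion} and \ref{properties.-1} are all available before this point), and correctly identify that $S\sub X_+$ is needed precisely to make $\I_S$ directed, hence solvex, so that the smallest extension ideal exists. Your version is more conceptual, avoids any computation with the concrete form of ideals in $Y$, and proves Corollary~\ref{corsomething1} en route rather than deriving it afterwards as the paper does; the paper's version, in exchange, exhibits the pre-image identity $[\I_{i(S)}]i=\I_S$ by hand instead of extracting it from the cited result Proposition~\ref{properties.-1}.
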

\begin{proof}
We first show that $\I_{i(S)}$ is an extension ideal of $\I_S$. 
The inclusion $\I_S\sub [\I_{i(S)}]i$ follows from Proposition~\ref{inclusion}.
For the converse inclusion recall that the ideal $\I_{i(S)}$ in the vector lattice $Y$ has the form
\[\I_{i(S)} = \left\{y\in Y\mid| \exists s_1,\ldots,s_n\in S\hs \exists \lambda\geq 0\colon |y|\leq\lambda\sum_{k=1}^n i(s_k)\right\}.\]
Thus for every $x\in X$ with $i(x)\in \I_{i(S)}$ there exist $s_1,\ldots, s_n \in S$ such that $\pm x \leq \lambda \sum_{i=1}^n s_i$. By Proposition~\ref{prelim.30a} it follows $x\in \I_S$. Altogether we have $\I_S = [\I_{i(S)}]i$, i.e.\ $\I_{i(S)}$ is an extension ideal of $\I_S$.
It is left to show that $\I_{i(S)}$ is the smallest extension ideal of $\I_S$. This follows immediately, as every extension ideal of $\I_S$ has to contain the set $i(S)$.
%
%
\end{proof}

The following result gives conditions, under which an ideal in a pre-Riesz space is directed. In particular, it shows that directed ideals are precisely those ideals which have an extension ideal generated by positive elements of the pre-Riesz space.
\begin{theorem}\label{something}
Let $X$ be a pre-Riesz space with a vector lattice cover $(Y,i)$ and let $I\sub X$ be an ideal. Then the following statements are equivalent.
\begin{enumerate}
\item\label{something.i1} $I$ is directed.
\item\label{something.i3} There exists a set $S\sub X_+$ of positive elements such that $I=\I_S$.
\item\label{something.i2} The set $i(I)$ is majorizing in $\I_{i(I)}$.
\item\label{something.i4} $I$ has an extension ideal and for the smallest extension ideal $\hat{I}$ of $I$ there exists a set $S\sub X_+$ such that $\hat{I}=\I_{i(S)}$.
\end{enumerate}
If one of the previous statements is true, then in \ref{something.i3} and \ref{something.i4} we can choose $S:=I_+$.
\end{theorem}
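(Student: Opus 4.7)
The plan is to show $(i)\Leftrightarrow(ii)$ and then close the cycle $(ii)\Rightarrow(iv)\Rightarrow(iii)\Rightarrow(i)$, verifying along the way that the canonical choice $S:=I_+$ witnesses $(ii)$ and $(iv)$ whenever $(i)$ holds. The equivalence $(i)\Leftrightarrow(ii)$ is immediate: $(ii)\Rightarrow(i)$ is exactly Proposition~\ref{prelim.30a}, and $(i)\Rightarrow(ii)$ follows because directedness gives $I=I_+-I_+$, while the ideal $\I_{I_+}$ already contains $I_+$, so $I=\I_{I_+}$.

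\textbf{From $(ii)$ to $(iv)$ to $(iii)$.} For $(ii)\Rightarrow(iv)$ I invoke the fact recalled in Section~2 that every directed ideal is solvex, so $I$ has an extension ideal, and Proposition~\ref{genideal=erwideal} identifies the smallest one as $\hat{I}=\I_{i(S)}$; with $S=I_+$ this reads $\hat{I}=\I_{i(I_+)}$. For $(iv)\Rightarrow(iii)$ I first observe that the mere existence of an extension ideal forces $I$ to be solvex: the given extension ideal in the vector lattice $Y$ is automatically solvex (a solid convex subspace of a vector lattice is solvex), and the restriction property for solvex ideals then hands back a solvex $I=[J]i$. Proposition~\ref{properties.-1}(i) therefore yields $\hat{I}=\I_{i(I)}$, so by hypothesis $\I_{i(I)}=\I_{i(S)}$. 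The inclusion $i(S)\sub\hat{I}$ gives $S\sub[\hat{I}]i=I$, hence $S\sub I_+$. Applying the explicit formula of Proposition~\ref{prelim.30a} in $Y$ to $i(S)\sub Y_+$, every $y\in\I_{i(I)}$ satisfies $y\leq\lambda\sum_{k=1}^n i(s_k)\in i(I)$, which proves that $i(I)$ is majorizing in $\I_{i(I)}$.

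\textbf{The main step $(iii)\Rightarrow(i)$.} Given $x\in I$, I form $|i(x)|$, which lies in $\I_{i(I)}$ since this ideal of the vector lattice $Y$ is closed under the modulus. By $(iii)$ there is $a\in I$ with $|i(x)|\leq i(a)$; bipositivity of $i$ then forces $a\in I_+$, and from $\pm x\leq a$ I read off the decomposition $x=a-(a-x)$ into two elements of $I_+$, establishing directedness. The principal conceptual obstacle throughout is keeping straight the distinction between $\I_{i(I)}$ and $\hat{I}$: these objects coincide as soon as $I$ is solvex, but in $(iv)\Rightarrow(iii)$ one must first extract solvexness from the mere existence of an extension ideal before appealing to Proposition~\ref{properties.-1}(i).
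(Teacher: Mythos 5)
Your proof is correct, and the claim that $S:=I_+$ works is verified along the way. The overall route differs mildly from the paper's: the paper establishes the three equivalences directed $\Leftrightarrow$ ($I=\I_S$ for some $S\sub X_+$), directed $\Leftrightarrow$ ($i(I)$ majorizing in $\I_{i(I)}$), and ($I=\I_S$) $\Leftrightarrow$ (smallest extension ideal of the form $\I_{i(S)}$) separately, whereas you prove the first equivalence and then close the cycle through the extension-ideal condition and the majorizing condition. Your steps (ii)$\Rightarrow$(i), (i)$\Rightarrow$(ii), (ii)$\Rightarrow$(iv) and (iii)$\Rightarrow$(i) coincide with the paper's arguments (the last one is exactly the paper's proof that majorizing implies directed). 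The genuinely different step is your (iv)$\Rightarrow$(iii): you first extract solvexness of $I$ from the mere existence of an extension ideal (an ideal of a vector lattice is solid and convex, hence solvex, and the restriction property for solvex ideals returns a solvex $I$), identify $\hat I=\I_{i(I)}=\I_{i(S)}$ via Proposition~\ref{properties.-1}, pull $S$ back into $I_+$, and majorize each $y\in\I_{i(S)}$ by $\lambda\sum_{k=1}^n i(s_k)\in i(I)$. The paper instead derives the majorizing property directly from directedness, writing $a=a_1-a_2$ with $a_1,a_2\in i(I_+)$ and bounding $|a|=a^++a^-\leq a_1+a_2\in i(I)$, and handles (iv)$\Rightarrow$(ii) in one line via Proposition~\ref{genideal=erwideal}. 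Your detour is sound --- the fact that an ideal is solvex if and only if it has an extension ideal is the same one the paper invokes in the proof of Proposition~\ref{inclusion} --- and it buys a more economical implication graph at the cost of a slightly longer individual step; the paper's direct arguments are shorter but prove six implications instead of five.
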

\begin{proof}
\ref{something.i1} $\Rightarrow$ \ref{something.i3}: Let $S:=I_+$.
Clearly, since $I$ is directed and $\I_S$ is a vector space containing $S$, we have $I=I_+-I_+\sub \I_S$.
On the other hand, due to $S\sub I$ it follows $\I_S\sub \I_I=I$.

\ref{something.i3} $\Rightarrow$ \ref{something.i1}: This implication follows from Proposition~\ref{prelim.30a}.

\ref{something.i1} $\Rightarrow$ \ref{something.i2}: Let $I$ be a directed ideal in $X$ and let $z\in\I_{i(I)}$. In the vector lattice $Y$ for the ideal $\I_{i(I)}$ we have
\[\I_{i(I)} =\left\{y\in Y \mid| \exists a\in i(I)\colon |y|\leq |a|\right\}.\]
That is, there exists an $a\in i(I)$ such that $|z|\leq |a|$. Since $I$ is directed, there are $a_1,a_2\in i(I_+)=i(I)_+$ with $a=a_1-a_2$. We obtain the inequality $z\leq |z|\leq |a| = a^++a^- \leq a_1+a_2\in i(I_+)$. Therefore $i(I)$ is majorizing in $\I_{i(I)}$.

\ref{something.i2} $\Rightarrow$ \ref{something.i1}: Let $i(I)$ be majorizing in $\I_{i(I)}$ and let $x\in I$. Then  $|i(x)|\in \I_{i(I)}$ is majorized by a positive element $i(a)\in i(I)$, i.e.\ $i(x)\leq |i(x)| \leq i(a)$ and thus $x\leq a$. Let $x_1:=a\geq 0$ and $x_2:=a-x\geq 0$. Then $x=x_1-x_2$ for $x_1,x_2\in I_+$. That is, $I_+$ is generating and therefore $I$ is directed.

\ref{something.i3} $\Rightarrow$ \ref{something.i4}: This implication follows from Proposition~\ref{genideal=erwideal}. 

\ref{something.i4} $\Rightarrow$ \ref{something.i3}: Let $I$ have the smallest extension ideal $\hat{I}$ in $Y$ such that $\hat{I}=\I_{i(S)}$ for a set $S\sub X_+$. By Proposition~\ref{genideal=erwideal}
we have $\I_S=[\I_{i(S)}]i= I$.
\end{proof}

Note that, contrary to the case of ideals, a set $S\sub X_+$ may generate a non-directed band $\B_S$, see Example~\ref{Namioka} with $S:=I_+$.

\begin{corollary}\label{corsomething1}
Let $X$ be a pre-Riesz space and $(Y,i)$ a vector lattice cover of $X$. Let $S$ be a non-empty subset of $X_+$. Then we have $\I_{i(S)}=\I_{i(\I_S)}$.
\end{corollary}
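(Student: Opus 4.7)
The plan is to prove the two inclusions $\I_{i(\I_S)} \subseteq \I_{i(S)}$ and $\I_{i(S)} \subseteq \I_{i(\I_S)}$ separately, using in an essential way that $S \subseteq X_+$ so that Proposition~\ref{genideal=erwideal} applies to $\I_S$.

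For $\I_{i(\I_S)} \subseteq \I_{i(S)}$: By Proposition~\ref{genideal=erwideal}, the ideal $\I_{i(S)}$ in $Y$ is the smallest extension ideal of $\I_S$, i.e.\ $\I_S = [\I_{i(S)}]i$. This immediately gives $i(\I_S) \subseteq \I_{i(S)}$. Since $\I_{i(S)}$ is then an ideal in $Y$ containing $i(\I_S)$, and $\I_{i(\I_S)}$ is by definition the smallest such ideal, the inclusion follows.

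For the reverse inclusion $\I_{i(S)} \subseteq \I_{i(\I_S)}$: From $S \subseteq \I_S$ we get $i(S) \subseteq i(\I_S) \subseteq \I_{i(\I_S)}$, so $\I_{i(\I_S)}$ is an ideal in $Y$ containing the generating set $i(S)$ of $\I_{i(S)}$; minimality of $\I_{i(S)}$ among ideals in $Y$ containing $i(S)$ then yields the claim.

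There is no real obstacle here; the whole argument is a two-line minimality chase once Proposition~\ref{genideal=erwideal} is invoked to see that $i(\I_S)$ actually sits inside $\I_{i(S)}$. The only subtle point worth stating explicitly is that the hypothesis $S \subseteq X_+$ is used solely to guarantee that $\I_S$ has an extension ideal (via Proposition~\ref{genideal=erwideal}); without this, the inclusion $i(\I_S) \subseteq \I_{i(S)}$ would still follow from Proposition~\ref{inclusion}, so in fact the same proof works under the weaker hypothesis $S \subseteq X$, but the clean identification with the smallest extension ideal justifies the positivity assumption for consistency with the preceding corollary.
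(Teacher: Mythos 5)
Your proof is correct and follows essentially the same route as the paper: the easy inclusion from $i(S)\subseteq i(\I_S)$ plus minimality, and the reverse inclusion from Proposition~\ref{genideal=erwideal} giving $i(\I_S)\subseteq\I_{i(S)}$. Your side remark that Proposition~\ref{inclusion} would let the statement go through for arbitrary non-empty $S\subseteq X$ is also accurate.
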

\begin{proof}
Clearly, $i(S)\sub i(\I_S)$ and therefore $\I_{i(S)}\sub\I_{i(\I_S)}$. For the converse inclusion, notice that by Proposition~\ref{genideal=erwideal} the smallest extension ideal of $\I_S$ in $Y$ is the ideal $\I_{i(S)}$. Hence $i(\I_S)\sub \I_{i(S)}$, which implies $\I_{i(\I_S)}\sub \I_{i(S)}$.
\end{proof}

Next we investigate under which conditions an analogue of Proposition~\ref{genideal=erwideal} is true for bands. 
By Proposition~\ref{properties.-1} for a band $B\sub X$ the smallest extension band is given by $\B_{i(B)}$. Let $B=\B_S$ for a non-empty set $S\sub X$. Under which conditions is the smallest extension band of $B$ given by $\B_{i(S)}$?
\begin{theorem}\label{something.a}
Let $X$ be a fordable pre-Riesz space and $(Y,i)$ a vector lattice cover of $X$. Let $S\sub X$ be a non-empty subset. Then the smallest extension band of $\B_S$ is the band $\B_{i(S)}$.
\end{theorem}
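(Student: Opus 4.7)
The plan is to show the two inclusions of the equality between the smallest extension band of $\B_S$ and $\B_{i(S)}$ by unpacking the definitions via $\B_M = M^{\mathrm{dd}}$ (Lemma~\ref{0.0}, applied both in $X$ and in the pre-Riesz space $Y$) and by exploiting fordability through Lemma~\ref{closedness.1000}. The key identity to establish is $[i(S)^{\mathrm{dd}}]i = S^{\mathrm{dd}}$, since this says $\B_{i(S)}$ is an extension band of $\B_S$; minimality of $\B_{i(S)}$ among extension bands is then an easy set-theoretic consequence.

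First I would observe that $\B_{i(S)} = i(S)^{\mathrm{dd}}$ and $\B_S = S^{\mathrm{dd}}$. For the inclusion $S^{\mathrm{dd}} \sub [i(S)^{\mathrm{dd}}]i$, I would use fordability: by the restriction property (R) for bands in a fordable pre-Riesz space, the preimage $[i(S)^{\mathrm{dd}}]i$ is a band in $X$, and clearly it contains $S$ (because $i(S) \sub i(S)^{\mathrm{dd}}$). Hence it contains the smallest band $\B_S = S^{\mathrm{dd}}$.

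For the reverse inclusion $[i(S)^{\mathrm{dd}}]i \sub S^{\mathrm{dd}}$, take $x \in X$ with $i(x)\in i(S)^{\mathrm{dd}}$ and any $y\in S^{\mathrm{d}}$. By Lemma~\ref{closedness.1000} we have $S^{\mathrm{d}} = [i(S)^{\mathrm{d}}]i$, so $i(y)\in i(S)^{\mathrm{d}}$, hence $i(x) \perp i(y)$ in $Y$. Since the disjointness notions agree between an order dense subspace and its ambient vector lattice (the fact recalled in the preliminaries), $x \perp y$ in $X$. As $y\in S^{\mathrm{d}}$ was arbitrary, $x\in S^{\mathrm{dd}} = \B_S$. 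This establishes that $\B_{i(S)}$ is an extension band of $\B_S$.

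Finally, to see minimality, let $J\sub Y$ be any extension band of $\B_S$, i.e.\ $[J]i = \B_S$. Since $S\sub \B_S = [J]i$, we have $i(S)\sub J$, and $J$ being a band forces $\B_{i(S)} = i(S)^{\mathrm{dd}} \sub J$. The main obstacle is the inclusion $[i(S)^{\mathrm{dd}}]i \sub S^{\mathrm{dd}}$: without fordability we would not have Lemma~\ref{closedness.1000} available to identify $S^{\mathrm{d}}$ with $[i(S)^{\mathrm{d}}]i$, and the argument connecting disjointness in $X$ with disjointness in $Y$ against arbitrary elements of $i(S)^{\mathrm{d}}\sub Y$ would break down. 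This is consistent with the example promised after the theorem, where fordability cannot be dropped.
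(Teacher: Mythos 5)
Your proof is correct, and its skeleton --- establish $[\B_{i(S)}]i=\B_S$, then deduce minimality from the fact that every extension band of $\B_S$ must contain $i(S)$ --- is the same as the paper's. The difference is in the middle step: the paper simply quotes Lemma~\ref{closedness.1000}, which states verbatim the key identity $[i(S)\dd]i=S\dd$, combines it with Lemma~\ref{0.0}, and is done; you instead re-derive this identity by splitting it into two inclusions, proving $S\dd\sub[i(S)\dd]i$ via the restriction property (R) for bands (valid because $X$ is fordable) and $[i(S)\dd]i\sub S\dd$ via the compatibility of disjointness between $X$ and $Y$. Both inclusions are argued correctly, so in effect you have given an independent proof of Lemma~\ref{closedness.1000} inside the proof of the theorem; that is a legitimate, slightly more self-contained route. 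Two slips in your write-up are worth flagging, though neither breaks the argument. First, you cite Lemma~\ref{closedness.1000} for the identity $S^{\t{d}}=[i(S)^{\t{d}}]i$; that is not what the lemma says (it concerns double disjoint complements). The single-complement identity is nevertheless true and elementary: it follows from the fact recalled in the preliminaries that $x\perp y$ in $X$ iff $i(x)\perp i(y)$ in $Y$, and it needs no fordability. Second, your closing diagnosis of where fordability is indispensable is reversed: the inclusion $[i(S)\dd]i\sub S\dd$ that you single out goes through in any pre-Riesz space (your own argument for it uses only the preservation of disjointness under $i$), whereas it is the inclusion $S\dd\sub[i(S)\dd]i$ --- your step invoking (R) for bands --- that genuinely requires fordability. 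Example~\ref{fordabilit_necessary} confirms this: there $[\B_{i(S)}]i=\t{span}\left\{v_1,v_4\right\}\subsetneq X=\B_S$, so it is precisely the inclusion you proved via (R) that fails.
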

\begin{proof}
First notice that by Lemma~\ref{closedness.1000}
and Lemma~\ref{0.0}
 we have
\[ [\B_{i(S)}]i=[i(S)\dd]i=S\dd=\B_S,\]
that is, the band $\B_{i(S)}$ is an extension band of $\B_S$.
Due to $S\sub \B_S$ every extension band of $\B_S$ contains the set $i(S)$. The band $\B_{i(S)}$ is the smallest band in $Y$ containing $i(S)$ and thus is the smallest extension band of $\B_S$.
\end{proof}

In the following example we show that in Theorem~\ref{something.a} the condition of $X$ being fordable can not be omitted.
\begin{example}\label{fordabilit_necessary}
\textit{A non-fordable pre-Riesz space in which for a subset $S$ the band $\B_{i(S)}$ is not the extension band of $\B_S$.}

Let $X:=\RR^3$. Consider in $X$ the four vectors
\[v_1=\smallmath[.7]{\left(\begin{array}{c} 1\\ 0\\ 1 \end{array}\right)},\hs
	v_2=\smallmath[.7]{\left(\begin{array}{c} 0\\ 1\\ 1 \end{array}\right)},\hs
	v_3=\smallmath[.7]{\left(\begin{array}{c} -1\\ 0\\ 1 \end{array}\right)},\hs
	v_4=\smallmath[.7]{\left(\begin{array}{c} 0\\ -1\\ 1 \end{array}\right)},
\]
and let the order on $X$ be induced by the \textit{four ray cone} $K_4:=\t{pos}\left\{v_1,v_2,v_3,v_4\right\}$, i.e.\ by the positive-linear hull of the vectors $v_1,\ldots,v_4$.
By \c[Proposition~13]{5} the pre-Riesz space $(X,K_4)$ can be order densely embedded into $\RR^4$ equipped with the standard cone $\RR^4_+$. Recall that $X'$ is identified with $\RR^3$. By \c[Example~4.8]{2} the cone $K_4$ can be represented using the functionals
\[f_1=\smallmath[.7]{\left(\begin{array}{c} -1\\ -1\\ 1 \end{array}\right)},\hs 
	f_2=\smallmath[.7]{\left(\begin{array}{c} 1\\ -1\\ 1 \end{array}\right)},\hs
	f_3=\smallmath[.7]{\left(\begin{array}{c} 1\\ 1\\ 1 \end{array}\right)},\hs
	f_4=\smallmath[.7]{\left(\begin{array}{c} -1\\ 1\\ 1 \end{array}\right)}
\]
in $X'$, namely
$K_4=\left\{x\in \RR^n \mid| \forall i\in\left\{1,\ldots,4\right\}\colon f_i(x) \geq 0\right\}$.
The map $i\colon X\rightarrow \RR^4$, given by $i\colon x\mapsto (f_1(x),\ldots,f_4(x))^T$,
is a bipositive embedding of $(X, K_4)$ into the vector lattice cover $(\RR^4,\RR^4_+)$. 

Let $S:=\left\{v_1,v_4\right\}$. Observe that $\t{span}\left\{v_1,v_4\right\}=\t{ker}f_4$. In \c[Example~5.13]{2} it is established that all non-trivial bands in $X$ are one-dimensional. Therefore the smallest band in $X$ containing $S$ is the whole space $X$, i.e.\ we have $\B_S=X$. Since $i(S)=\left\{(0,1,1,0)^T,(1,1,0,0)^T\right\}$, it follows $(i(S))^{\t{d}}=\t{span}\left\{(0,0,0,1)^T\right\}$. This yields
\[\B_{i(S)} = \t{span}\left\{(1,0,0,0)^T,(0,1,0,0)^T,(0,0,1,0)^T\right\}.\]
In \c[Example~5.13]{2} it is shown $[\B_{i(S)}]i =\t{span}\left\{v_1,v_4\right\} \neq\B_S$. It follows that $\B_{i(S)}$ is not an extension band of $\B_S$.

As was already mentioned in \c[Example~5.13]{2}, the pre-Riesz space $X$ does not have the restriction property (R) for bands, since the restriction $\t{span}\left\{v_1,v_4\right\}$ of the band $\B_{i(S)}$ to $X$ is not a band. As every fordable pre-Riesz space has (R) for bands, we conclude that $X$ is not fordable.
\end{example}

In a pre-Riesz space, by Theorem~\ref{something}  an ideal $I$ is directed if and only if $i(I)$ is majorizing in $\I_{i(I)}$.  Is the same equivalence true for every band $B$ in $X$? Clearly, if $i(B)$ is majorizing in $\B_{i(B)}$, then $B$ is directed. The next example demonstrates that the converse is not true even under additional conditions on the space $X$.
\begin{example}\label{ex2}
\textit{A pervasive pre-Riesz space with RDP and an order unit, where for a directed band $B$ the linear subspace $i(B)$ is not majorizing in the smallest extension band $\hat{B}$ of $B$.}

Let $X=C^1[0,1]$ be the vector space of continuously differentiable functions on the interval $[0,1]$ endowed with pointwise order. From \c[Example~3.7]{3} it follows that $Y:=C[0,1]$ is a vector lattice cover of the pre-Riesz space $X$ and that $X$ is pervasive. Let
\[S:= \left\{f\in X_+ \mid| f([0,\tfrac{1}{2}]\cup\left\{1\right\}) =0\right\}.\]
Recall that for $x,y\in X$ we have $x\perp y$ in $X$ if and only if $i(x)\perp i(y)$ in $Y$. Thus the band generated by $S$ has the form
$B:=\B_S=\left\{f\in X \mid| f([0,\tfrac{1}{2}]) =0\right\}$.
Clearly, $\B_S$ is directed.
For the band $\B_{i(S)}$ in $Y$ we have
\[\B_{i(S)} = \left\{f\in Y \mid| f([0,\tfrac{1}{2}])=0\right\}.\]
As pervasiveness implies fordability, we can apply Theorem~\ref{something.a}. It yields that $\B_{i(S)}$ is the smallest extension band of $B$. That is, we have $\B_{i(S)}=\hat{B}=\B_{i(B)}$. However, $i(B)$ is not majorizing in $\B_{i(S)}$. Indeed, let $g\in Y$ be defined by $g(t)=0$ for $t\in[0,\tfrac{1}{2}]$ and $g(t)=t-\tfrac{1}{2}$ for $t\in\hs]\tfrac{1}{2},1]$. Then we have $g\in\B_{i(S)}$, but there does not exist a continuously differentiable function $f\in B$ such that $g\leq f$.
\end{example}

In the remainder of this section we provide an application of Theorem~\ref{something}. The result in Theorem~\ref{closedness.7supperp} below is a generalization of a well-known statement for Archimedean vector lattices. First we recall a technical result for Archimedean pervasive pre-Riesz spaces, which is given in \c[Theorem~10]{02}.

\begin{proposition}\label{1.3.5}
Let $X$ be an Archimedean pervasive pre-Riesz space and let $I$ be a directed ideal in $X$. 
Then
\begin{equation*}
\Big\{x\in X_+ \hs\Big|\hs x=\sup\left\{a\in I\mid| 0\leq a\leq x\right\}\Big\} = (I\dd)_+.
\end{equation*}
\end{proposition}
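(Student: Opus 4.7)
The strategy is to pass to an (arbitrary) vector lattice cover $(Y,i)$ of $X$, which is automatically Archimedean since $X$ is Archimedean and directed. In $Y$ the analogue of the claimed identity is a classical fact for ideals of Archimedean vector lattices, and one then transfers the supremum back to $X$ using order density and bipositivity.

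\emph{Inclusion ``$\sub$''.} Fix $x\in X_+$ with $x=\sup A$ in $X$, where $A:=\{a\in I\mid| 0\le a\le x\}$. Since $i(X)$ is order dense in $Y$, this supremum is preserved in the cover: $i(x)=\sup i(A)$ in $Y$ (any $w\in Y$ above $i(A)$ satisfies $w=\inf\{z\in i(X)\mid| z\ge w\}$, and each such $z$ is already $\ge i(x)$). To see $x\in I\dd$, pick $y\in I^\t{d}$. Each $a\in A\sub I$ satisfies $a\perp y$, and so $i(a)\Inf|i(y)|=0$ in $Y$. Distributivity of $\wedge$ over suprema in vector lattices then yields
\[
i(x)\Inf|i(y)|=\sup_{a\in A}\bigl(i(a)\Inf|i(y)|\bigr)=0,
\]
so $i(x)\perp i(y)$ in $Y$, and hence $x\perp y$ in $X$, since disjointness agrees on order dense subspaces.

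\emph{Inclusion ``$\supseteq$''.} Let $x\in(I\dd)_+$. Pervasiveness implies fordability, so by Theorem~\ref{something.a} applied to $S:=I$ the smallest extension band of $\B_I=I\dd$ in $Y$ equals $\B_{i(I)}=i(I)\dd$; in particular $i(x)\in(i(I)\dd)_+$. The classical vector-lattice fact in the Archimedean $Y$ then gives $i(x)=\sup i(A)$ in $Y$, with $A$ as above. Finally, $x$ is trivially an upper bound of $A$ in $X$; any other upper bound $z\in X$ satisfies $i(z)\ge i(a)$ for every $a\in A$, hence $i(z)\ge i(x)$, and therefore $z\ge x$ by bipositivity. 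Thus $x=\sup A$ in $X$.

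\emph{Main obstacle.} The nontrivial ingredient is the classical statement invoked in the backward direction: for an ideal $J$ of an Archimedean vector lattice $Y$ and $y\in(J\dd)_+$ one has $y=\sup\{b\in J\mid| 0\le b\le y\}$. The standard argument replaces an arbitrary upper bound $u$ by $u\Inf y$, so $0\le u\le y$; sets $a:=y-u$; and shows inductively that $n(a\Inf b)\le y$ for every $b\in J_+$ and $n\in\NN$, since $k(a\Inf b)\le y$ puts $k(a\Inf b)$ in $J\cap[0,y]$, hence $\le u$, and then $(k+1)(a\Inf b)\le u+a=y$. The Archimedean property forces $a\Inf b=0$, so $a\in J^\t{d}$; combined with $0\le a\le y\in J\dd$ (an ideal) this yields $a\in J^\t{d}\cap J\dd=\{0\}$, i.e.\ $u=y$. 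Once this fact is in hand, the pervasiveness hypothesis is used purely to ensure, via Theorem~\ref{something.a}, that $I\dd$ in $X$ lifts to $i(I)\dd$ in $Y$.
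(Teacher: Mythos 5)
The paper does not actually prove Proposition~\ref{1.3.5}; it quotes it from \cite[Theorem~10]{02}, so there is no in-paper proof to compare against and I can only judge your argument on its own terms. Your inclusion ``$\subseteq$'' is correct: order density transfers the supremum from $X$ to $Y$, the infinite distributive law gives $i(x)\wedge|i(y)|=0$, and disjointness restricts back to $X$. The problem is in ``$\supseteq$''. The classical fact you invoke is a statement about an \emph{ideal $J$ of the vector lattice $Y$}, but $i(I)$ is not an ideal of $Y$ -- it is only the image of an ideal of $X$. Applied to the ideal $J:=\I_{i(I)}$ generated by $i(I)$ in $Y$ (which does satisfy $J\dd=i(I)\dd\ni i(x)$), the classical fact yields
\[
i(x)=\sup\{b\in J : 0\le b\le i(x)\},
\]
a supremum over a set that in general strictly contains $i(A)$. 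Concluding $\sup i(A)=i(x)$ from this is not automatic: Lemma~\ref{properties.27.5} passes from a smaller set to a larger one, which is the opposite of what is needed here, and Examples~\ref{example1} and~\ref{example2} show that a majorizing copy of a subspace need not be order dense in the ideal it generates. So the sentence ``the classical vector-lattice fact then gives $i(x)=\sup i(A)$'' is precisely where the real work lies, and your closing claim that pervasiveness is used ``purely'' to lift $I\dd$ to $i(I)\dd$ is not correct.

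To close the gap one needs pervasiveness a second time and, crucially, the directedness of $I$, which your argument never uses even though it is a hypothesis of the proposition. Concretely: for each $b\in J$ with $0<b\le i(x)$, Lemma~\ref{properties.1} gives $b=\sup\{v\in i(X): 0\le v\le b\}$; each such $v$ lies in $J\cap i(X)$ by the ideal property of $J$; and $J\cap i(X)=i(I)$ because $J=\I_{i(I)}$ is an extension ideal of $I$ -- this last identity is Proposition~\ref{genideal=erwideal} combined with Theorem~\ref{something} and Corollary~\ref{corsomething1}, and it is exactly where directedness enters. Hence every $b$ in the large set is a supremum of elements of $i(A)$, and an iterated-suprema argument (together with Lemma~\ref{properties.27.5}) yields $\sup i(A)=i(x)$; transferring back to $X$ by bipositivity, as you do, then finishes the proof. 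With this insertion your proof becomes complete.
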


\begin{theorem}\label{closedness.7supperp}
Let $X$ be an Archimedean pervasive pre-Riesz space, $a\in X$ and $S\sub X_+$ such that $\sup S$ exists in $X$. Then the relation $a\perp S$ implies $a\perp\sup S$.
\end{theorem}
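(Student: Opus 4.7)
The plan is to rephrase the claim as a membership statement about disjoint complements and then invoke Proposition~\ref{1.3.5}. First I would observe that $\left\{a\right\}^{\t{d}}$ is a band in $X$ and, since every band in a pre-Riesz space is an ideal, the hypothesis $a\perp S$ yields $S\sub\left\{a\right\}^{\t{d}}$; therefore the ideal $\I_S$ generated by $S$ satisfies $\I_S\sub\left\{a\right\}^{\t{d}}$. Taking disjoint complements twice and using $\left\{a\right\}^{\t{ddd}}=\left\{a\right\}^{\t{d}}$ gives $\I_S\dd\sub\left\{a\right\}^{\t{d}}$, so it suffices to prove $s_0:=\sup S\in\I_S\dd$.

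Next I would note that $s_0\in X_+$, since each $s\in S$ is positive, and that $\I_S$ is directed by Proposition~\ref{prelim.30a}. Proposition~\ref{1.3.5} therefore applies to $I:=\I_S$ and characterizes $(\I_S\dd)_+$ as the set of those $x\in X_+$ with $x=\sup\left\{b\in\I_S\mid| 0\leq b\leq x\right\}$. Because $S\sub\I_S\cap[0,s_0]$, one has
\[S\sub\left\{b\in\I_S\mid| 0\leq b\leq s_0\right\}\sub[0,s_0],\]
and combining $\sup S=s_0$ with Lemma~\ref{properties.27.5} forces $\sup\left\{b\in\I_S\mid| 0\leq b\leq s_0\right\}=s_0$. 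Hence $s_0\in(\I_S\dd)_+\sub\left\{a\right\}^{\t{d}}$, which is exactly $a\perp\sup S$.

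The substantive input is pervasiveness (together with $X$ being Archimedean), which enters solely through Proposition~\ref{1.3.5}; without it the key inclusion $s_0\in\I_S\dd$ is unavailable and the argument collapses. Everything else is formal manipulation of $\cdot^{\t{d}}$ and $\cdot\dd$, combined with the trivial squeeze from Lemma~\ref{properties.27.5}. The main conceptual hurdle I anticipate is recognizing that pervasiveness must be brought in via Proposition~\ref{1.3.5} in precisely this way, and that $\left\{a\right\}^{\t{d}}$ should be treated as an ideal (not merely a band) so that it absorbs the entire directed ideal $\I_S$ generated by $S$.
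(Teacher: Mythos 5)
Your proof is correct and follows essentially the same route as the paper's: both arguments reduce the claim to showing $\sup S\in\I_S\dd$ by combining the directedness of $\I_S$ (Proposition~\ref{prelim.30a}), the squeeze via Lemma~\ref{properties.27.5} to get $\sup S=\sup\left\{b\in\I_S\mid| 0\leq b\leq \sup S\right\}$, and Proposition~\ref{1.3.5}. The only cosmetic difference is that you extract the conclusion through the inclusion $\I_S\dd\sub\left\{a\right\}^{\t{d}}$, whereas the paper writes $a\in \B_S^{\t{d}}$ with $\B_S=\I_S\dd$; these are the same step.
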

\begin{proof}
Let $S\sub X_+$ be such that $\sup S$ exists in $X$ and let $a\in X$ with $a\perp S$. Consider the set
$M:=\left\{x\in X\mid| \exists s\in S\colon 0\leq x\leq s\right\}$. Clearly, $M=\left\{x\in \I_S\mid| \exists s\in S\colon 0\leq x\leq s\right\}$.
Since $S\sub M$, by Lemma~\ref{properties.27.5} we obtain that $\sup M$ exists in $X$ and $\sup M=\sup S$.
Let $A:=\left\{x\in\I_S\mid|0\leq x \leq \sup M\right\}$. Due to $M\sub A$ by Lemma~\ref{properties.27.5} we obtain that $\sup A$ exists in $X$ and equals $\sup M$. To sum up, we have $\sup M=\sup\left\{x\in\I_S\mid| 0\leq x\leq \sup M\right\}$.

We show $a\perp \sup M$. Since the set $S$ consists of positive elements, by Theorem~\ref{something} the ideal $\I_S$ is directed.
As $X$ is Archimedean and pervasive, by Proposition~\ref{1.3.5} we conclude that $\sup M \in \I_S\dd= \B_S$. We have $a\in S^{\t{d}}  = S^{\t{ddd}} = \B_S^{\t{d}}$ and therefore $a\perp \sup M$.  The equality $\sup M=\sup S$ yields $a\perp \sup S$.
\end{proof}
Clearly, in the setting of Theorem~\ref{closedness.7supperp} we obtain for two sets $S_1,S_2\sub X_+$, for which the suprema $\sup S_1$ and $\sup S_2$ exist in $X$, that the relation $S_1\perp S_2$ implies $\sup S_1\perp \sup S_2$.

Note that in \c[Example~20]{02} an Archimedean pre-Riesz space $X$ and a directed band $B=\B_S$ for a set $S\sub B_+$ are given, where the existence of $\sup S$ in $X$ does not imply that $\sup S$ is an element of $B$. By Theorem~\ref{closedness.7supperp} we obtain conditions for an according affirmative statement.
\begin{corollary}\label{closedness.7supperpcor}
Let $X$ be an Archimedean pervasive pre-Riesz space and $S\sub X_+$ a non-empty subset for which $\sup S$ exists in $X$. Then $\sup S\in \B_S$.
\end{corollary}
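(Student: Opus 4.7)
The plan is to reduce the corollary directly to Theorem~\ref{closedness.7supperp} together with the identification of $\B_S$ as the double disjoint complement $S^{\mathrm{dd}}$ provided by Lemma~\ref{0.0}. Since $\B_S = S^{\mathrm{dd}}$, membership of $\sup S$ in $\B_S$ is equivalent to the assertion that $\sup S \perp a$ for every $a \in S^{\mathrm{d}}$.

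First I would fix an arbitrary $a \in S^{\mathrm{d}}$. By definition of the disjoint complement this means $a \perp s$ for every $s \in S$, which is exactly the hypothesis $a \perp S$ required by Theorem~\ref{closedness.7supperp}. Since $X$ is Archimedean and pervasive and $\sup S$ exists in $X$ by assumption, Theorem~\ref{closedness.7supperp} applies and yields $a \perp \sup S$.

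As $a \in S^{\mathrm{d}}$ was arbitrary, this shows $\sup S \in S^{\mathrm{dd}}$. Invoking Lemma~\ref{0.0} to rewrite $S^{\mathrm{dd}} = \B_S$ completes the proof.

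There is no real obstacle here; the work has already been done in Theorem~\ref{closedness.7supperp}. The corollary is essentially a reformulation: Theorem~\ref{closedness.7supperp} says that disjointness with $S$ is preserved by taking the supremum, and the corollary packages this preservation as the statement that $\sup S$ lies in the band generated by $S$.
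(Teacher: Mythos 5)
Your proposal is correct and follows exactly the paper's own argument: fix $a\in S^{\mathrm{d}}$, apply Theorem~\ref{closedness.7supperp} to conclude $\sup S\perp a$, and then invoke Lemma~\ref{0.0} to identify $S^{\mathrm{dd}}$ with $\mathcal{B}_S$. There is nothing to add or correct.
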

\begin{proof}

For every $a\in S^{\t{d}}$ by Theorem~\ref{closedness.7supperp} it follows that $\sup S\perp a$. Hence by Lemma~\ref{0.0} we have $\sup S\in S\dd=\B_S$.
\end{proof}

\section{Order denseness of ideals and bands in their extensions}\label{extensions}

We consider again a pre-Riesz space $X$ and  a vector lattice cover $(Y,i)$ of $X$.
For a solvex ideal $I$ and a band $B$ in $X$, there exist the smallest extension ideal $\hat{I}$ of $I$ and the smallest extension band $\hat{B}$ of $B$ in $Y$. Clearly, these extensions are vector sublattices of $Y$. 
By means of $i$, we embed $I$ into the vector lattice $\hat{I}$, and $B$ into the vector lattice $\hat{B}$.
This raises the questions under which conditions $i(I)$ is order dense in $\hat{I}$, and $i(B)$ is order dense in $\hat{B}$.
We give examples that these questions can not be  answered affirmatively, in general, even if  
$i(I)$ is majorizing in $\hat{I}$, or $i(B)$ is majorizing in $\hat{B}$.
We will show some affirmative results in pervasive pre-Riesz spaces.

We begin with an example of a pre-Riesz space that is not pervasive and provide a directed ideal $I$ such that $i(I)$ is not order dense in $\hat{I}$.
\begin{example}\label{example1}
\textit{A directed ideal which is majorizing but not order dense in its smallest extension ideal.}

We resume Example~\ref{fordabilit_necessary}, where we considered $X:=\RR^3$ endowed with the order induced by the four ray cone $K_4$.
As is established in \c[Theorem~4.8 and Proposition~4.10]{7}, for every positive functional $f$ on $X$ which is extremal in the dual cone the kernel of $f$ is a solvex ideal in $X$. In particular, for the functional $f_4$ as in Example~\ref{fordabilit_necessary} the space $I:=\t{ker}f_4$ is a solvex ideal in $X$. Geometrically, $I$ corresponds to the plane generated by the vectors $v_1$ and $v_4$ in $K_4$. Note that $I$ is directed. For $S:=\left\{v_1,v_4\right\}$ we have $I=\I_S$. 
Applying Proposition~\ref{genideal=erwideal}, we obtain that the subspace $\hat{I}:=\I_{\left\{i(v_1),i(v_4)\right\}}=\left\{(x_1,x_2,x_3,0)^T\mid|x_1,x_2,x_3\in\RR\right\}$ of $Y=\RR^4$ is the smallest extension ideal of $I$. By Theorem~\ref{something} and Corollary~\ref{corsomething1} the subspace $i(I)$ is majorizing in $\hat{I}=\I_{i(I)}$.
We have
\[i(I)=i(X)\cap\hat{I}=\t{span}\left\{\smallmath[.6]{\left(\begin{array}{c} 1\\ 1\\ 0\\ 0 \end{array}\right)},\smallmath[.6]{\left(\begin{array}{c} 0\\ 1\\ 1\\ 0 \end{array}\right)}\right\}=\left\{\smallmath[.6]{\left(\begin{array}{c} \lambda\\ \lambda+\mu\\ \mu\\ 0 \end{array}\right)}\mid|\lambda,\mu\in\RR\right\}.\]
We show that $i(I)$ is not order dense in $\hat{I}$. To that end, let $z:=(1,0,1,0)\in\hat{I}$. We establish that $z$ can not be written as an infimum of elements in $i(I)$.
Let $M:=\left\{x\in i(I)\mid| z\leq x\right\} = \left\{(\lambda,\lambda+\mu,\mu,0)^T\mid| \lambda,\mu \geq 1\right\}$. As the lattice operations in $Y$ are pointwise, $\inf M$ exists in $i(X)$ and equals $(1,2,1,0)^T \neq z$.
That is, $i(I)$ is not order dense in $\hat{I}$.
\end{example}

\begin{numremark}
Note that Example~\ref{example1} yields a negative answer to the following question:
Let $X$ be an Archimedean pre-Riesz space with a vector lattice cover $(Y,i)$ and $D\sub X$ a directed subspace of $X$. Since $D$ is Archimedean, it follows that $D$ is pre-Riesz. Does there exist a sublattice $\hat{D}$ of $Y$ such that $i(D)$ is order dense in $\hat{D}$?
In Example~\ref{example1}, the subspace $i(I)$ of $Y$ is a vector lattice in the sense that it is stable under its own lattice operations, not those inherited from $Y$. That is, $i(I)$ is not a vector sublattice of $Y$.
If $A\sub Y$ is the smallest sublattice of $Y$ containing $I$, then $A$ has to contain the pointwise supremum of $(1,1,0,0)^T$ and $(0,1,1,0)^T$. Hence, $A=\hat{I}$. We already established that $i(I)$ is not order dense in $\hat{I}$. Therefore $Y$ does not contain a vector lattice cover of $I$.
\end{numremark}

In Theorem~\ref{closedness.1001} below we give conditions such that a directed ideal is order dense in its smallest extension ideal.
First we need the following theorem which characterizes pervasive pre-Riesz spaces.
\begin{theorem}\label{properties.28neu}
Let $X$ be an Archimedean pre-Riesz space, $Y$ a vector lattice and $i\colon X\rightarrow Y$ a bipositive linear map. Then the following statements are equivalent.
\begin{enumerate}
\item\label{properties.28neu.it1} $i(X)$ is order dense in $Y$ and $X$ is pervasive.
\item\label{properties.28neu.it2} $i(X)$ is majorizing in $Y$, and for every $y\in Y_+$ there is $A\sub i(X)_+$ such that $ y=\sup A$.
\end{enumerate}
\end{theorem}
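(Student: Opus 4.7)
The two implications decouple naturally, with the forward direction essentially immediate from the preliminaries and Lemma~\ref{properties.1}, while the reverse direction requires a translation trick to move from positive elements to all of $Y$.

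For (i) $\Rightarrow$ (ii), the remark after the definition of order denseness already shows that $i(X)$ is majorizing in $Y$. For the supremum property, fix $y \in Y_+$: if $y = 0$, take $A := \{0\}$, and otherwise apply Lemma~\ref{properties.1} with $A := \left\{x \in i(X) \mid| 0 < x \leq y\right\} \subseteq i(X)_+$ to obtain $y = \sup A$.

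For the converse (ii) $\Rightarrow$ (i), I treat pervasiveness and order denseness separately. For pervasiveness, fix $y \in Y_+$ with $y \neq 0$ and write $y = \sup A$ for some $A \subseteq i(X)_+$; if every $a \in A$ vanished, then $\sup A = 0 \neq y$, so there is some $a \in A$ with $a > 0$, and bipositivity of $i$ together with $a \leq y$ yields $x \in X_+$ with $0 < i(x) \leq y$. For order denseness, fix arbitrary $y \in Y$, use the majorizing hypothesis to pick $d \in i(X)$ with $y \leq d$, and apply (ii) to $d - y \in Y_+$ to obtain $A \subseteq i(X)_+$ with $\sup A = d - y$. A direct calculation gives $y = \inf\left\{d - a \mid| a \in A\right\}$, where each $d - a$ lies in $i(X)$ and majorizes $y$ (since $a \leq d - y$ implies $y \leq d - a$). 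Lemma~\ref{properties.27.5} enlarges the indexing set to the full $\left\{z \in i(X) \mid| y \leq z\right\}$ without changing the infimum, and Lemma~\ref{properties.29} converts this equality into order denseness of $i(X)$ in $Y$.

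The main obstacle is the order-denseness step in (ii) $\Rightarrow$ (i): the hypothesis produces suprema of positive elements, while order denseness demands an infimum representation for every $y \in Y$, not just for $y \in Y_+$. Both parts of (ii) are genuinely used here, since the majorizing condition is precisely what allows one to pass from $y$ to the positive element $d - y$ before invoking the supremum representation; everything else amounts to bookkeeping with Lemmas~\ref{properties.27.5} and~\ref{properties.29}.
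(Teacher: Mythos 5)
Your proof is correct and follows the same basic strategy as the paper's --- use the majorizing hypothesis to shift an arbitrary $y\in Y$ by an element of $i(X)$ into the positive cone, apply the supremum representation there, and finish with Lemma~\ref{properties.29} --- but two details are organized differently, and arguably more cleanly. First, for order denseness you pick a single majorant $d\in i(X)$ of $y$ and write $y=\inf\left\{d-a \mid| a\in A\right\}$ where $\sup A=d-y$, which feeds directly into the infimum form of Lemma~\ref{properties.29} as stated; the paper instead works with the supremum reformulation of that lemma and needs a two-stage reduction (first the case where the translating element lies in $i(X)$, then the general case via the majorizing property), which your version collapses into one step. Second, for pervasiveness you verify the definition directly: a nonzero $y\in Y_+$ equal to $\sup A$ with $A\sub i(X)_+$ must dominate some nonzero $a\in A$, and bipositivity of $i$ finishes. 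The paper instead routes through Lemma~\ref{properties.27.5} and the characterization in Lemma~\ref{properties.1}; your argument is more elementary and does not even invoke the Archimedean hypothesis for this part. Both proofs are sound; yours trades the paper's reliance on Lemma~\ref{properties.1} for a direct appeal to the definition of pervasiveness.
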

\begin{proof}
\ref{properties.28neu.it1} $\Rightarrow$ \ref{properties.28neu.it2}: Since $i(X)$ is order dense in $Y$, it is majorizing. For $y=0$ by setting $A:=\left\{0\right\}\sub i(X)$ we obtain $y=\sup A$. For $y\in Y_+$, $y\neq 0$, we can set $A:=\left\{v\in i(X)\mid| 0< v\leq y\right\}$. As $X$ is Archimedean, by Lemma~\ref{properties.1} we obtain $\sup A=y$.

\ref{properties.28neu.it2} $\Rightarrow$ \ref{properties.28neu.it1}: We first show the order denseness of $X$ in $Y$. To that end, we establish instead
\[\forall y\in Y\hs\exists A\sub i(X)\colon y=\sup A\]
and then apply Lemma~\ref{properties.29}. Clearly, if $y>0$, then by our hypothesis we are finished. Let $y\in Y$ be arbitrary. Then there exists an element $y_o\in Y_+$, $y_o\neq 0$, such that $y< y_o$. That is, we have $y=y_o-z$ for some $z\in Y_+$, $z\neq 0$.

We show first for the case $z\in i(X)$ that $y=y_o-z$ can be written as a supremum of a subset of $i(X)$. Since $y_o$ is positive, by assumption there exists a set $C\sub i(X)_+$ such that $y_o=\sup C$. It follows
$y = y_o-z = \sup C - z = \sup (C-z)$.
Due to $C\sub i(X)$ and $z\in i(X)$ 
we have $A:=C-z=\left\{c-z\mid|c\in C\right\}\subseteq i(X)$ and $y=\sup A$.

Let now $y=y_o-z$ with $z\in Y_+$, $z\neq 0$. Since $i(X)$ is majorizing in $Y$, there exists a $z\in i(X)$ such that $z\geq -y$, i.e.\ $y+z\geq 0$. Then the decomposition $y=(y+z)-z$ is such that $y_o:=y+z\in Y_+$ and $z\in i(X)$. By the above argumentation it follows that there exists a set $A\sub i(X)$ such that $y=\sup A$. Using Lemma~\ref{properties.29} we conclude that $X$ is order dense in $Y$. 

To see that $X$ is pervasive, notice that by assumption for every $y>0$ there exists a set $A\sub i(X)_+$ with $y=\sup A$. Clearly, $A\sub\left\{v\in i(X) \mid| 0\leq v\leq y\right\}$. By Lemma~\ref{properties.27.5} it follows that $\sup\left\{v\in i(X) \mid| 0\leq v\leq y\right\}$ exists in $X$ and equals $y$. With Lemma~\ref{properties.1} we conclude that $X$ is pervasive.
\end{proof}

\begin{theorem}\label{closedness.1001}
Let $X$ be an Archimedean pervasive pre-Riesz space with a vector lattice cover $(Y,i)$. Let $I\sub X$ be a directed ideal and $\hat{I}\sub Y$ the smallest extension ideal of $I$. Then $(\hat{I},i|_{I})$ is a vector lattice cover of the pre-Riesz space $I$, and $I$ is pervasive.
\end{theorem}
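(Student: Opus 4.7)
The plan is to apply Theorem~\ref{properties.28neu} directly with $I$ in place of $X$ and $\hat{I}$ in place of $Y$. Note first that all the hypotheses of that theorem are satisfied: $I$ is a subspace of the Archimedean space $X$, hence itself Archimedean; $I$ is directed by assumption; since an Archimedean directed ordered vector space is pre-Riesz, $I$ is a pre-Riesz space. The smallest extension ideal $\hat{I}$ is an ideal in the vector lattice $Y$ and is therefore a sublattice, hence a vector lattice; and the restriction $i|_I\colon I\to \hat{I}$ is bipositive because $i$ is bipositive and orders are induced from $X$ and $Y$.

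Now I would verify condition \ref{properties.28neu.it2} of Theorem~\ref{properties.28neu} for the pair $(I,\hat{I})$. The majorizing part follows at once from Theorem~\ref{something}: since $I$ is a directed ideal, $i(I)$ is majorizing in $\I_{i(I)}$, which by Proposition~\ref{genideal=erwideal} applied to $S:=I_+$ (using the direction of $I$ so that $\I_{I_+}=I$) is exactly $\hat{I}$.

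For the supremum part, let $y\in \hat{I}_+$. If $y=0$, take $A=\{0\}\subseteq i(I)_+$. If $y\neq 0$, then since $X$ is pervasive and $y\in Y_+$, Lemma~\ref{properties.1} gives
\[
y = \sup\bigl\{v\in i(X)\,\bigm|\, 0<v\leq y\bigr\}
\]
in $Y$. The key observation is that every such $v$ satisfies $0\leq v\leq y$ with $y\in\hat{I}$, and since $\hat{I}$ is an ideal in $Y$, we get $v\in\hat{I}$. Using the fact that $\hat{I}$ is an extension ideal of $I$, we have $i(I)=i(X)\cap\hat{I}$, so $v\in i(I)_+$. Setting $A:=\{v\in i(I)\mid 0<v\leq y\}\subseteq i(I)_+$, we obtain $y=\sup A$, where the supremum is taken in $Y$. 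It still holds in $\hat{I}$ because $\hat{I}$ is a sublattice of $Y$ and $y$ is an upper bound of $A$ in $\hat{I}$, any upper bound of $A$ in $\hat{I}$ being also an upper bound in $Y$.

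With both items of condition \ref{properties.28neu.it2} verified, Theorem~\ref{properties.28neu} yields simultaneously that $i(I)$ is order dense in $\hat{I}$ and that $I$ is pervasive, completing the proof. I don't anticipate any real obstacle: the whole argument is essentially the identification $i(I)=i(X)\cap\hat{I}$ combined with Theorem~\ref{properties.28neu} and Theorem~\ref{something}; the only small point to be careful about is that the supremum taken in $Y$ is inherited by the sublattice $\hat{I}$, which is immediate.
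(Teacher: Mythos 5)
Your proposal is correct and follows essentially the same route as the paper's proof: verify condition \ref{properties.28neu.it2} of Theorem~\ref{properties.28neu} for the pair $(I,\hat{I})$, obtaining the majorizing part from Theorem~\ref{something} together with Proposition~\ref{genideal=erwideal} (and Corollary~\ref{corsomething1}), and the supremum part from pervasiveness of $X$ via Lemma~\ref{properties.1} combined with the identity $i(X)\cap\hat{I}=i(I)$ and the ideal property of $\hat{I}$. Your added remark that the supremum computed in $Y$ is inherited by $\hat{I}$ is a correct (and welcome) explicit justification of a step the paper leaves implicit.
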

\begin{proof}
Since the ideal $I$ is directed, it is solvex. Hence there exists the smallest extension ideal $\hat{I}$ of $I$ in $Y$. 
The ideal $\hat{I}$ is a sublattice of $Y$ with the order induced from $Y$.

We intend to show the properties of Theorem~\ref{properties.28neu}~\ref{properties.28neu.it2} for $I$ and $\hat{I}$. 
We first prove that $I$ is majorizing in $\hat{I}$.
As $I$ is directed, by Proposition~\ref{genideal=erwideal} and Corollary~\ref{corsomething1} we have $\hat{I}=\I_{i(I)}$.  Now Theorem~\ref{something} implies that $i(I)$ is majorizing in $\hat{I}$.

To show the second property of Theorem~\ref{properties.28neu}~\ref{properties.28neu.it2},
let $d\in \hat{I}_+$.
 Since $X$ is pervasive, by Lemma~\ref{properties.1} we obtain
\[d=\sup\left\{b\in i(X)\mid| 0< b\leq d\right\}.\]
As $\hat{I}$ is an extension ideal of $I$, we have $i(X)\cap\hat{I}=i(I)$. Using  the ideal property of $\hat{I}$, we get
\begin{align*}
d &=\sup\left\{b\in i(X)\mid| 0\leq b\leq d\right\} \\
&=\sup\left\{b\in i(X)\cap \hat{I}\mid| 0\leq b\leq d\right\} \\
&=\sup\left\{b\in i(I)\mid| 0\leq b\leq d\right\}.
\end{align*}
Thus the set $A:=\left\{b\in i(I)\mid| 0\leq b\leq d\right\}\sub i(I)_+$ is such that 
$\sup A=d$. 
Theorem~\ref{properties.28neu}
now implies that $i(I)$ is order dense in $\hat{I}$ and that  $I$ is pervasive.
\end{proof}

\begin{corollary}\label{closedness.1001.cor1}
Let $X$ be an Archimedean pervasive pre-Riesz space. Let $I\sub X$ be a directed ideal and $\hat{I}$ the smallest extension ideal of $I$ in $X^\delta$. Then $(\hat{I},i|_{I})$ is the Dedekind completion of $I$, and $I$ is pervasive.
\end{corollary}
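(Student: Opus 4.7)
The plan is to derive the corollary as a direct specialization of Theorem~\ref{closedness.1001} to the vector lattice cover $Y := X^\delta$, combined with the observation that ideals in a Dedekind complete vector lattice are themselves Dedekind complete.

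First, I would recall from the preliminaries that, since $X$ is Archimedean and directed, its Dedekind completion $X^\delta$ exists and is a vector lattice cover of $X$. Applying Theorem~\ref{closedness.1001} with this particular choice of $(Y,i)$ immediately yields that $(\hat{I}, i|_I)$ is a vector lattice cover of the pre-Riesz space $I$---so, in particular, $i(I)$ is order dense in $\hat{I}$---and that $I$ is pervasive. This already settles the second assertion of the corollary.

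For the first assertion, I would invoke Lemma~\ref{properties.0}: since $X^\delta$ is Dedekind complete and $\hat{I}$ is an ideal in $X^\delta$, the sublattice $\hat{I}$ is itself Dedekind complete. Then I would appeal to the uniqueness of the Dedekind completion for Archimedean directed ordered vector spaces stated in the preliminaries: up to Riesz isomorphism, $I^\delta$ is characterized as a Dedekind complete vector lattice into which $I$ embeds order densely. The subspace $I$ is Archimedean (as a subspace of $X$) and directed by Theorem~\ref{something}, and $(\hat{I}, i|_I)$ supplies precisely such an embedding, so the identification $(\hat{I}, i|_I) = I^\delta$ follows.

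There is essentially no new obstacle here: Theorem~\ref{closedness.1001} carries the technical burden, and the only extra observation needed is that, when the ambient cover is chosen to be $X^\delta$, the smallest extension ideal $\hat{I}$ inherits Dedekind completeness from $X^\delta$ via the ideal property, which upgrades the generic vector lattice cover of $I$ provided by Theorem~\ref{closedness.1001} to the Dedekind completion.
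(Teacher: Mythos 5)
Your proposal is correct and follows exactly the paper's own route: the paper's proof likewise applies Theorem~\ref{closedness.1001} to $Y:=X^\delta$ and then invokes Lemma~\ref{properties.0}, with your additional remarks (order density, uniqueness of the Dedekind completion) merely spelling out what the paper leaves implicit.
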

\begin{proof}
We apply Theorem~\ref{closedness.1001} to $Y:=X^\delta$. Then Lemma~\ref{properties.0} yields the result.
\end{proof}

\begin{numremark}
A result analogous to Corollary~\ref{closedness.1001.cor1} for the Riesz completion $X^\rho$ does not exist, so far. For an Archimedean pervasive pre-Riesz space $X$, a directed ideal $I\sub X$ and the smallest extension ideal $\hat{I}\sub X^\rho$ of $I$, by Theorem~\ref{closedness.1001} the space $(\hat{I},i|_{I})$ is a vector lattice cover of $I$. We conjecture that $\hat{I}$ is the Riesz completion of $I$.
\end{numremark}

The next example shows that a directed band is not order dense in its smallest extension band, in general.
\begin{example}\label{example2}
\textit{A directed band which is majorizing but not order dense in its smallest extension band.}

We resume Example~\ref{fordabilit_necessary}. 
As is established in \c[Example~5.13]{2}, the linear subspace $B:=\t{span}\left\{v_2\right\}$ of $X$ is a band. The band $B$ corresponds in $i(X)$ to the linear subspace
$i(B)=\t{span}\left\{(0,0,1,1)^T\right\}$.
Since in $\RR^4$ the lattice operations are pointwise, the subspace $\hat{B}=\left\{(0,0,x_3,x_4)^T\mid| x_3,x_4\in\RR\right\}$ is the smallest extension band of $B$ in $\RR^4$.

Clearly, $i(B)$ is majorizing in $\hat{B}$. However, $i(B)$ is not order dense in $\hat{B}$. Let namely $z:=(0,0,0,1)^T\in\hat{B}$. By an argumentation similar to Example~\ref{example1} the element $z$ can not be written as an infimum of elements in $i(B)$.
\end{example}

Next we study under which conditions a band has its smallest extension band as a vector lattice cover.
\begin{theorem}\label{closedness.1002}
Let $X$ be an Archimedean pervasive pre-Riesz space with a vector lattice cover $(Y,i)$. Let $B\sub X$ be a band and $\hat{B}\sub Y$ an extension band of $B$. Then the following statements are equivalent.
\begin{enumerate}
\item\label{closedness.1002.it1} $i(B)$ is majorizing in $\hat{B}$.
\item\label{closedness.1002.it2} $i(B)$ is order dense in $\hat{B}$.
\end{enumerate}
Moreover, if \ref{closedness.1002.it1} or \ref{closedness.1002.it2} are satisfied, then $B$ is a pre-Riesz space and hence directed and $\hat{B}$ is a vector lattice cover of $B$. Moreover, $B$ is pervasive.
\end{theorem}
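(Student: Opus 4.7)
The implication (ii) $\Rightarrow$ (i) is immediate since any order dense subspace is majorizing. For (i) $\Rightarrow$ (ii) together with the remaining conclusions, the plan is to verify the two conditions of Theorem~\ref{properties.28neu} for the bipositive map $i|_{B}\colon B\to \hat{B}$ and then read off order denseness and pervasiveness from the equivalence stated there.

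Before invoking Theorem~\ref{properties.28neu}, I first need that $B$ is a pre-Riesz space. Since $B\sub X$ is Archimedean, it suffices to show $B$ is directed. Given $x\in B$, the element $i(x)$ lies in $\hat{B}$, and since $\hat{B}$ is a band, hence an ideal, in the vector lattice $Y$, the modulus $|i(x)|$ lies in $\hat{B}_+$. By hypothesis (i) there exists $c\in i(B)$ with $|i(x)|\leq c$; in particular $c\geq 0$, so bipositivity yields $c=i(b)$ for some $b\in B_+$. From $\pm i(x)\leq i(b)$ we obtain $\pm x\leq b$, hence $x=b-(b-x)$ is a difference of elements of $B_+$. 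Thus $B$ is directed and, being Archimedean, a pre-Riesz space.

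To verify the second condition of Theorem~\ref{properties.28neu}, I must show that every $d\in \hat{B}_+$ is a supremum of a subset of $i(B)_+$. For $d=0$ take $A=\{0\}$. For $d>0$, pervasiveness of $X$ and Lemma~\ref{properties.1} give $d=\sup\{v\in i(X) : 0<v\leq d\}$ in $Y$. Every such $v$ satisfies $0\leq v\leq d\in\hat{B}$, so the ideal property of $\hat{B}$ forces $v\in i(X)\cap\hat{B}$; since $\hat{B}$ is an extension band of $B$ we have $[\hat{B}]i=B$, whence $i(X)\cap\hat{B}=i(B)$. Thus $d=\sup A$ in $Y$ for some $A\sub i(B)_+$, and because $\hat{B}$ is a sublattice of $Y$ this identity also holds in $\hat{B}$.

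Theorem~\ref{properties.28neu}, applied to the Archimedean pre-Riesz space $B$ with vector lattice target $\hat{B}$ and embedding $i|_{B}$, then yields that $i(B)$ is order dense in $\hat{B}$ and that $B$ is pervasive. Since $\hat{B}$ is a band in $Y$, it is itself a vector lattice, so the order dense bipositive embedding $i|_{B}$ makes $\hat{B}$ a vector lattice cover of $B$. The main subtlety I anticipate is the initial directedness argument: the hypothesis (i) provides only that $i(B)$ majorizes $\hat{B}$, with no a priori positivity of the majorants, so bipositivity of $i$ together with the ideal property of $\hat{B}$ (supplying $|i(x)|\in\hat{B}$) must both be used to extract a positive majorant of $i(x)$ inside $i(B)$.
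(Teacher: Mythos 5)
Your proposal is correct and follows essentially the same route as the paper's proof: establish directedness of $B$ from the majorizing hypothesis using the lattice structure of $\hat{B}$ (you use $|i(x)|$ where the paper uses $i(b_1)\vee i(b_2)$, an immaterial variation), then combine pervasiveness of $X$, Lemma~\ref{properties.1}, the identity $i(X)\cap\hat{B}=i(B)$ and the ideal property of $\hat{B}$ to write each $d\in\hat{B}_+$ as a supremum of a subset of $i(B)_+$, and conclude via Theorem~\ref{properties.28neu}. Your closing remarks on the need for bipositivity to extract a positive majorant and on suprema passing from $Y$ to the sublattice $\hat{B}$ are sound and, if anything, slightly more careful than the paper's exposition.
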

\begin{proof}
The implication \ref{closedness.1002.it2} $\Rightarrow$ \ref{closedness.1002.it1} follows immediately.

We show \ref{closedness.1002.it1} $\Rightarrow$ \ref{closedness.1002.it2}. Since $i(B)$ is majorizing in the sublattice $\hat{B}$ of $Y$, it follows that $B$ is directed. Indeed, for $b_1,b_2\in B$ we have $i(b_1)\Sup i(b_2)\in \hat{B}$. As $B$ is majorizing in $\hat{B}$, there exists a $c\in B$ with $i(b_1)\Sup i(b_2) \leq i(c)$, i.e.\ $b_1,b_2\leq c$. Due to $X$ being pervasive and Archimedean Lemma~\ref{properties.1} implies for every $d\in \hat{B}_+$
\[d=\sup\left\{b\in i(X)\mid| 0\leq b\leq d\right\}.\]
For the extension band $\hat{B}$ we have $i(X)\cap \hat{B}=i(B)$. The ideal property of $\hat{B}$ yields
\begin{align*}
d &=\sup\left\{b\in i(X)\mid| 0\leq b\leq d\right\} =\\
&=\sup\left\{b\in i(X)\cap \hat{B}\mid| 0\leq b\leq d\right\} =\\
&=\sup\left\{b\in i(B)\mid| 0\leq b\leq d\right\}.
\end{align*}
That is, for every $d\in \hat{B}_+$ there exists a set $A\sub i(B)$ such that $\sup A=d$. As $i(B)$ is majorizing in $\hat{B}$, by Theorem~\ref{properties.28neu} we obtain that $i(B)$ is order dense in $\hat{B}$ and $B$ is pervasive.
\end{proof}

\begin{corollary}\label{closedness1002.cor1}
Let $X$ be an Archimedean pervasive pre-Riesz space. Let $B$ be a band in $X$, $\hat{B}$ the smallest extension band of $B$ in $X^\delta$, and let $i(B)$ be majorizing in $\hat{B}$. Then $(\hat{B},i|_{B})$ is the Dedekind completion of $B$, and $B$ is pervasive.
\end{corollary}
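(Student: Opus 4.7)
The plan is to mirror, for bands, the one-line argument already given for directed ideals in Corollary~\ref{closedness.1001.cor1}. Concretely, I would apply Theorem~\ref{closedness.1002} with the vector lattice cover $Y := X^\delta$, which is legitimate since the preliminaries recall that the Dedekind completion of an Archimedean directed ordered vector space is itself a vector lattice cover. The hypothesis that $i(B)$ is majorizing in $\hat{B}$ is given by assumption, so the theorem directly delivers three conclusions: $i(B)$ is order dense in $\hat{B}$, $B$ is pervasive, and $(\hat{B},i|_{B})$ is a vector lattice cover of $B$.

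It then remains to upgrade ``vector lattice cover'' to ``Dedekind completion.'' For this I would argue that $\hat{B}$ is Dedekind complete. Since $\hat{B}$ is a band in the Archimedean vector lattice $X^\delta$, and in this setting the notion of a band coincides with the classical vector-lattice notion of an order closed ideal (as recalled after the definition of a band in Section~2), $\hat{B}$ is an ideal in $X^\delta$. Because $X^\delta$ is Dedekind complete, Lemma~\ref{properties.0} gives that $\hat{B}$ is itself a Dedekind complete sublattice of $X^\delta$.

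Combining these two facts, $\hat{B}$ is a Dedekind complete vector lattice containing $i(B)$ as an order dense (and in particular majorizing) sublattice. By the uniqueness of the Dedekind completion of an Archimedean directed ordered vector space (invoked in the preliminaries via \cite{VulikhWeber}), this characterizes $(\hat{B},i|_{B})$ as the Dedekind completion of $B$. The pervasiveness of $B$ was already recorded in the first step. I do not anticipate any genuine obstacle here; the entire content of the corollary is to observe that Theorem~\ref{closedness.1002} plus Lemma~\ref{properties.0} combine in exactly the same manner as in the proof of Corollary~\ref{closedness.1001.cor1}, with the band-version of the main theorem in place of the ideal-version.
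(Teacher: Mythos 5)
Your proposal is correct and is exactly the argument the paper intends: the corollary is stated without proof precisely because it mirrors Corollary~\ref{closedness.1001.cor1}, i.e.\ one applies Theorem~\ref{closedness.1002} with $Y:=X^\delta$ and then uses Lemma~\ref{properties.0} (noting that the band $\hat{B}$ is in particular an ideal in the Dedekind complete lattice $X^\delta$) together with the uniqueness of the Dedekind completion. Your additional remark that $B$ is Archimedean (as a subspace of $X$) and directed (by Theorem~\ref{closedness.1002}), so that its Dedekind completion is well defined, is a correct and welcome bit of bookkeeping.
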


Let $X$ be an Archimedean pervasive pre-Riesz space with a vector lattice cover $(Y,i)$.
Clearly, if $\hat{B}$ is a vector lattice cover of a band $B$ in $X$, then $B$ is directed. Is the converse true? That is, is an analogous result to Theorem~\ref{closedness.1001} true for bands? This is not the case, as in Example~\ref{ex2} the band $B$ is directed in an Archimedean pervasive pre-Riesz space $X$, but $i(B)$ is not majorizing in $\hat{B}$. Thus by Theorem~\ref{closedness.1002} the subspace $i(B)$ is not order dense in $\hat{B}$.
Though $\hat{B}$ in Example~\ref{ex2} is not a proper candidate for a vector lattice cover of $B$, the smallest extension ideal $\hat{I}$ of $B$ in $Y$ is. Indeed, as $B$ is a directed ideal, $\hat{I}$ exists
and by Proposition~\ref{genideal=erwideal} and Corollary~\ref{corsomething1} we have $\hat{I}=\I_{i(B)}=\left\{f\in Y \mid| f([0,\tfrac{1}{2}]) =0,\hs f \t{ is differentiable in } \tfrac{1}{2}\right\}$. By Theorem~\ref{closedness.1001} the ideal $\hat{I}$ is a vector lattice cover of $B$.

In this example $\hat{B}$ and $\hat{I}$ do not coincide. This raises the following questions: Are there instances where the smallest extension ideal and the smallest extension band of a band coincide? Is this property dependent on the vector lattice cover in consideration?
The subsequent example, which originated during a discussion with O.~van~Gaans, gives affirmative answers to those questions.
\begin{example}\label{closedness.18}
\textit{The smallest extension ideal and the smallest extension band of a band in a pervasive pre-Riesz space might or might not coincide, depending on the considered vector lattice cover.}

Let 
$\t{PA}[-1,1] := \left\{f\in C[-1,1] \mid| f \t{ piecewise affine} \right\}$
be the vector lattice of all piecewise affine continuous functions on $[-1,1]$ (finitely many pieces). Denote by $U(0)\sub[-1,1]$ a neighbourhood of $0$ and let
\[X_0:=\left\{f\in \t{PA}[-1,1] \mid| \exists U(0), c\in\RR \hs\forall t\in U(0):\hs f(t) = c\right\}.\]
Then $X_0$ is a vector lattice which is order dense in $Y:= C[-1,1]$.
We can regard $X_0$ as a pre-Riesz space with $X_0$ as its own Riesz completion, hence $X_0$ is pervasive. 

Define a function $q\in C[-1,1]$ by
\[q(t)= \begin{cases}t^2-\frac{1}{4} &\quad t\in\left[-1,-\frac{1}{2}\right]\cup\left[\frac{1}{2},1\right]\\ 0 &\quad t\in \hs\left]-\frac{1}{2},\frac{1}{2}\right[. \end{cases}\]
The ordered vector space $X:= \t{span}\left(X_0 \cup \left\{\lambda q \in C[-1,1]\mid| \lambda\in\RR\right\}\right) \sub Y$ is a pre-Riesz space. However, $X$ is not a vector lattice, since $X$ does not have the RDP. Indeed, define two functions $a_1,a_2\in X$ with $a_1(t)=-t-\tfrac{1}{2}$ for $t\in[-1,-\tfrac{1}{2}]$ and $a_1(t)=0$ for $t\in\hs]-\tfrac{1}{2},1]$ and $a_2(t)=t-\tfrac{1}{2}$ for $t\in[\tfrac{1}{2},1]$ and $a_2(t)=0$ for $t\in[-1,\tfrac{1}{2}[$. Then $q\leq a_1+a_2$. However, there do not exist $q_1,q_2\in X$ such that $q=q_1+q_2$ and $q_1\leq a_1$, $q_2\leq a_2$.

Since $X_0\sub X$, we obtain that $X$ is order dense in $Y$ and pervasive. As the second vector lattice cover of $X$ we consider the Riesz completion $X^\rho$, which can be represented as the vector sublattice of $Y$ generated by $X$. Note that every element of $X^\rho$ is constant on a neighbourhood of $0$.
The set 
\[B:=\left\{f\in X\mid|\exists U(0) \hs\forall t\in [-1,0]\cup U(0):\hs f(t) = 0\right\}\]
is a band in $X$. Observe that $B\sub X_0$. As $B$ is a band in the vector lattice $X_0$, it is directed.
We show that the smallest extension ideal and the smallest extension band of $B$ in $X^\rho$ coincide, whereas they do not coincide in the vector lattice cover $Y$.

The subspace
\[\I_B^Y:=\left\{f\in Y  \mid| \exists U(0) \hs\forall t\in [-1,0]\cup U(0):\hs f(t) = 0\right\}\]
is the ideal generated by $B$ in $Y$. As $B$ is solvex, $\I_B^Y$ is the smallest extension ideal of $B$ in $Y$. Analogously, the ideal $\I_B^{X^\rho}$ generated by $B$ in $X^\rho$ is the smallest extension ideal of $B$ in $X^\rho$.

The ideal $\I_B^Y$ is not a band. Indeed, the function $x\in (\I_B^Y)\dd$ with $x(t)=0$ for $t\in [-1,0]$ and $x(t)=t$ for $t\in\hs]0,1]$ does not belong to $\I_B^Y$. In the vector lattice $Y$ we have $(\I_B^Y)\dd=\B_B^Y$, which is the smallest extension band of $B$ in $Y$. Thus we established $\I_B^Y\hs\subsetneq\hs \B_{B}^Y$.

For $\I_B^{X^\rho}$ we have $(\I_B^{X^\rho})^\t{d}=\left\{f\in X^\rho  \mid| f([0,1]) = 0\right\}$. Since every element of $X^\rho$ is constant on a neighbourhood of $0$, we obtain
\[(\I_B^{X^\rho})\dd=\left\{f\in X^\rho  \mid| \exists U(0) \hs\forall t\in [-1,0]\cup U(0):\hs f(t) = 0\right\} = \I_B^{X^\rho}.\]
In the vector lattice $X^\rho$ the band $(\I_B^{X^\rho})\dd=\B_B^{X^\rho}$ is the smallest extension band of $B$. So we have shown $\I_B^{X^\rho}=\B_B^{X^\rho}$.

The example shows that the properties of extension bands and extension ideals might change, depending on which vector lattice cover we deal with. 
\end{example}

To sum up, the main results in the Theorems \ref{closedness.1001} and \ref{closedness.1002}
are set in pervasive pre-Riesz spaces. It is an interesting question for further research whether similar statements are true under milder conditions.



\end{document}